\DeclareFontFamily{U}{euf}{}
\DeclareFontShape{U}{euf}{m}{n}{%
  <5><6><7><8><9>gen*eufm%
  <10><10.95><12><14.4><17.28><20.74><24.88>eufm10%
  }{}
\DeclareFontShape{U}{euf}{b}{n}{%
  <5><6><7><8><9>gen*eufb%
  <10><10.95><12><14.4><17.28><20.74><24.88>eufb10%
  }{}
\DeclareFontFamily{U}{msb}{}
\DeclareFontShape{U}{msb}{m}{n}{%
  <5><6><7><8><9>gen*msbm%
  <10><10.95><12><14.4><17.28><20.74><24.88>msbm10%
  }{}
\DeclareFontFamily{U}{msa}{}
\DeclareFontShape{U}{msa}{m}{n}{%
  <5><6><7><8><9>gen*msam%
  <10><10.95><12><14.4><17.28><20.74><24.88>msam10%
  }{}
\newtheorem{theorem}{Theorem}[section]
\newtheorem{lemma}[theorem]{Lemma}
\newtheorem{corollary}[theorem]{Corollary}
\theoremstyle{definition}
\newtheorem{remark}[theorem]{Remark}
\newtheorem{example}[theorem]{Example}
\def\E{\overline E}
\numberwithin{equation}{section} \frenchspacing
\begin{document}

\title[{\tiny Asymptotic expansions for the alternating Hurwitz zeta function}]
{Asymptotic expansions for the alternating Hurwitz zeta function and its derivatives}

\author{Su Hu}
\address{Department of Mathematics, South China University of Technology, Guangzhou 510640, China}
\email{mahusu@scut.edu.cn}

\author{Min-Soo Kim}
\address{Department of Mathematics Education, Kyungnam University, Changwon, Gyeongnam 51767, Republic of Korea}
\email{mskim@kyungnam.ac.kr}

\subjclass[2020]{11M35, 33F05, 41A60, 40A25, 11B68}
\keywords{asymptotic expansions, alternating Hurwitz zeta functions, error bounds, Euler polynomials, gamma function}

\maketitle

\begin{abstract}
Let
$$
\zeta_E(s,q)=\sum_{n=0}^\infty\frac{(-1)^n}{(n+q)^{s}}
$$
be  the alternating Hurwitz (or Hurwitz-type Euler) zeta function.

In this paper,  we obtain the following asymptotic expansion
of  $\zeta_{E}(s,q)$
$$
\zeta_E(s,q)\sim\frac12 q^{-s}+\frac14sq^{-s-1}-\frac12q^{-s}\sum_{k=1}^\infty\frac{E_{2k+1}(0)}{(2k+1)!}\frac{(s)_{2k+1}}{q^{2k+1}},
$$
as $|q|\to\infty$, where
$E_{2k+1}(0)$ are the special values of odd-order Euler polynomials at 0,
and we also consider representations and bounds for the remainder of the above asymptotic expansion.
In addition, we derive the 
asymptotic expansions for the higher order derivatives of $\zeta_{E}(s,q)$ with respect to its first argument
$$\zeta_{E}^{(m)}(s,q)\equiv\frac{\partial^m}{\partial s^m}\zeta_E(s,q),$$
as $|q|\to\infty$. 
Finally, we also prove a new exact series representation of  
$\zeta_{E}(s,q)$.
\end{abstract}

\def\d{{\rm d}}

\section{Introducrion}\label{intro}

Let $\mathbb N$ be the set of natural numbers, $\mathbb N_0=\mathbb N\cup\{0\},$ 
$\mathbb R$ the field of real numbers and $\mathbb C$ the field of complex numbers.
The Hurwitz zeta function is defined by the following series
\begin{equation}\label{E-2}
\zeta(s,q)=\sum_{n=0}^\infty\frac1{(n+q)^s},
\end{equation}
where Re$(s)>1,~q\neq0,-1,-2,\ldots$
and it can be analytically continued to the entire $s$-plane except for a simple pole at $s=1$ with residue 1
(see \cite[p. 72, Definition 9.6.1]{Cohen} and \cite[p. 88, (1)]{SC}).
Special cases of $\zeta(s,q)$ include the Riemann zeta function
\begin{equation}\label{o-re}
\zeta(s,1)=\zeta(s)=\sum_{n=1}^\infty\frac1{n^s}
\end{equation}
and
$$\zeta\left(s,\frac12\right)=2^s\sum_{n=0}^\infty\frac1{(2n+1)^s}=(2^s-1)\zeta(s).$$
It is known that (see, e.g., \cite[p. 76, Corollary 9.6.10]{Cohen})
\begin{equation}\label{Bernoulli}
	\zeta(1-n,q)=-\frac1n B_n(q)
\end{equation}
for $n\in\mathbb N,$ where $B_n(q)$ denote the $n$th Bernoulli polynomials which
are defined as the coefficient of
$\frac{t^n}{n!}$ in the generating function
\begin{equation}\label{Ber-polynomial}
\frac{te^{qt}}{e^t-1}=\sum_{n=0}^{\infty}B_{n}(q) \frac{t^n}{n!},
\end{equation}
where $|t|<2\pi.$
In particular, $B_{n}(0)=B_{n}$ is the $n$th Bernoulli number.
The Bernoulli numbers and polynomials arise from Bernoulli's calculations of power sums in 1713, that is,
$$
\sum_{j=0}^{m}j^{n}=\frac{B_{n+1}(m+1)-B_{n+1}}{n+1}
$$
for $m,n\in\mathbb N_0$ (see \cite[p. 5, (2.2)]{Sun} and \cite[p. 931, (3.1)]{SK}).

The Hurwitz zeta function $\zeta(s,q)$ has the following asymptotic expansion
\begin{equation}\label{Hurwitz}
\zeta(s,q)\sim\frac12 q^{-s}+\frac{q^{1-s}}{s-1}+q^{1-s}\sum_{k=1}^\infty\frac{B_{2k}}{(2k)!}\frac{(s)_{2k-1}}{q^{2k}},
\end{equation}
as $|q|\to\infty$ in the sector $|\arg q|\leq\pi-\delta,$ with $s$ and $\delta>0$ being fixed, where
$B_{2k}$ are the even-order Bernoulli numbers and
$$(s)_k=s(s+1)\cdots(s+k-1)=\frac{\Gamma(s+k)}{\Gamma(s)}$$
is the Pochhammer symbol (the rising factorial function) (see \cite[p. 25]{Magnus} or \cite[Eq. 25.11.43]{Olver}).
Recently, Nemes \cite{Ne} derived new representations and bounds for the remainder of the asymptotic expansion (\ref{Hurwitz}).
 This work has been applied by Ekera in the theory of quantum computing, that is, by applying one of bounds of Nemes, Ekera proved a lower bound on the probability of Shor's order-finding algorithm successfully recovering the order $r$ in a single run (see
 \cite[Appendix D.2, Claim 6]{Ekera}).
 
It may also be interesting to  consider the derivatives of Hurwitz zeta function $\zeta(s,q)$ at non-positive integer values of $s,$ which have many applications in number theory and mathematical physics
(see \cite{Coffey,EE0,EE1,Ru,Te}). In 1986, by using Watson's Lemma and Laplace's method, Elizalde \cite{EE} gave an asymptotic expansion of the first derivatives
\begin{equation}\label{E-1}
\zeta'(-n,q)\equiv \left. \frac{\partial}{\partial s}\zeta(s,q)\right|_{s=-n}
\end{equation}
for $n\in\mathbb N_0.$ His starting point is Hermite's integral representation for $\zeta(s,q).$
The result has been re-obtined  by Rudaz \cite{Ru} using a direct method connected with the calculus of finite differences.
Earlier in 1984, to study an analogue of the formula of Chowla and Selberg for real quadratic fields, Deninger \cite{De} has investigated the second derivative $\zeta''(0,q)$.
In \cite{EE0} (also see \cite[Chapter 2]{EE1}), Elizalde further obtained some recursive formulas for the higher order derivatives, and presented explicit  formulas for the case of small $m$ and small $s=-n~(n\in\mathbb N)$ 
with respect to $\zeta^{(m)}(s+1,q),$ which completely solve the problem of the calculation of any derivatives of $\zeta(s,q).$
It is also interesting to refer \cite{EE,EE0,EE1,ER,Ru,seri} for many results on the
derivatives $\zeta^{(m)}(s,q)$ at $s=0$ and at $s=-n~(n\in\mathbb{N})$, respectively.

In this paper, we shall consider the alternating Hurwitz (or Hurwitz-type Euler) zeta function
which is defined by the following series
\begin{equation}\label{E-zeta-def}
\zeta_E(s,q)=\sum_{n=0}^\infty\frac{(-1)^n}{(n+q)^{s}},
\end{equation}
where Re$(s)>0,~q\neq0,-1,-2,\ldots$ (see \cite[p. 514, (3.1)]{Choi}, \cite[p. 186, (1.5)]{HKK}, \cite[p. 3, (1.13)]{HK2022},
\cite[p. 1166, (1)]{KMS} and \cite[p. 36, (1.1)]{WZ}).
We see that $\zeta_E(s,q)$ can be analytically
continued to the entire $s$-plane without any pole and it satisfies the following identities:
\begin{equation}\label{J-1}
\zeta_E(s,q+1)+\zeta_E(s,q)=q^{-s},
\end{equation}
\begin{equation}\label{J-2}
\frac{\partial}{\partial q}\zeta_E(s,q)=-s\zeta_E(s+1,q).
\end{equation}
The important special cases include $\zeta_E(s,1):=\zeta_E(s)=\eta(s),$ the Dirichlet eta function 
(see (\ref{Riemann-Euler}))
and $\zeta_E\left(s,\frac12\right)=2^s\beta(s),$  the Dirichlet beta function (see \cite[p. 952]{HK2019} for the definition).
As a function of $q,$ with $s$ fixed, $\zeta_E(s,q)$ is analytic in the domain $|\arg q|<\pi$ and possesses branch-point singularities at non-positive integer values of $q.$
In analogue with (\ref{Bernoulli}), it is known that
(see Corollary \ref{cor1})
\begin{equation}\label{Euler}\zeta_{E}(-n,q)=\frac{1}{2}E_n(q)\end{equation}
for $n\in\mathbb N_0$ (see \cite[p. 520, (3.20)]{Choi}, \cite[p. 188, (2.7)]{HKK} and \cite[p. 41, (3.8)]{WZ}).
Here the Euler polynomials $E_n(q)$ is defined by the generating function
\begin{equation}\label{Euler-polynomial}
\frac{2e^{qt}}{e^t+1}=\sum_{n=0}^{\infty}E_{n}(q) \frac{t^n}{n!},
\end{equation}
where $|t|<\pi$
(see \cite[p. 309, (7)]{Jor}).
Each $E_n(q)$ is a polynomial of degree $n$ with leading coefficient $1.$
The Euler polynomials were introduced by Euler who calculated  the alternating power sums, that is,
\begin{equation*}
\sum_{j=0}^{m}(-1)^{j}j^{n}=\frac{(-1)^{{m}}E_{n}(m+1)+E_{n}(0)}{2}
\end{equation*}
for $m,n\in\mathbb N_0$ (see \cite[p.~804, 23.1.4]{AS}, \cite[p. 5, (2.3)]{Sun} and \cite[p. 932, (3.5)]{SK}).

In algebraic number theory, the alternating Hurwitz zeta function $\zeta_E(s,q)$ represents a partial zeta function of cyclotomic fields in one version of Stark's conjectures (see \cite[p. 4249, (6.13)]{HK-G}),
and its special case, the alternating series (or, equivalently, the Dirichlet eta function),
\begin{equation}~\label{Riemann-Euler}
\zeta_{E}(s,1):=\zeta_{E}(s)=\sum_{n=1}^{\infty}\frac{(-1)^{n-1}}{n^{s}},
\end{equation}
is also a particular case of Witten's zeta functions in mathematical physics (see \cite[p. 248, (3.14)]{Min}).
Indeed, there is a relationship between $\zeta_{E}(s)$ and $\zeta(s)$:
\begin{equation}\label{Riemann-Euler1}
\zeta_E(s)=(1-2^{1-s})\zeta(s).
\end{equation}
In particular, for $s=2n~(n\in\mathbb N),$ it is well-known that
\begin{equation}~\label{posi-val}
\zeta_{E}(2n)=(1-2^{1-2n})\zeta(2n)=(2^{2n-1}-1)\frac{|B_{2n}|}{(2n)!}\pi^{2n}
\end{equation}
(see, e.g., \cite[p. 807, 23.2.16 and 23.2.19]{AS}).

In this paper, inspired by the work of Nemes \cite{Ne}, first we obtain the following asymptotic expansion
of the alternating Hurwitz zeta function $\zeta_{E}(s,q)$
\begin{equation}\label{main-ref}
\zeta_E(s,q)\sim\frac12 q^{-s}+\frac14sq^{-s-1}-\frac12q^{-s}\sum_{k=1}^\infty\frac{E_{2k+1}(0)}{(2k+1)!}\frac{(s)_{2k+1}}{q^{2k+1}},
\end{equation}
as $|q|\to\infty$ in the sector $|\arg q|\leq\pi-\delta,$ with $s$ and $\delta>0$ being fixed, where
$E_{2k+1}(0)$ are the special values of odd-order Euler polynomials at 0 (see Theorem \ref{thm1}),
and we also get representations and bounds for the remainder of the above asymptotic expansion
(see Theorems \ref{asy-pro1}, \ref{asy-thm} and \ref{asy-pro2}).

Then inspired by the works of Elizalde \cite{EE,EE0,EE1}, Nemes \cite{Ne} and Rudaz \cite{Ru}, we derive the 
asymptotic expansions for the higher order derivatives of the alternating Hurwitz zeta function with respect to its first argument
$$\zeta_{E}^{(m)}(s,q)\equiv\frac{\partial^m}{\partial s^m}\zeta_E(s,q),$$
as $|q|\to\infty$ in the sector $|\arg q|\leq\pi-\delta,$ with $s$ and $\delta>0$ being fixed
(see Theorems \ref{thm2} and \ref{thm3} and their corollaries). Finally, we prove a new exact series representation of the alternating Hurwitz zeta function 
$\zeta_{E}(s,q)$ (see Theorem \ref{new-series}).

\section{Preliminaries}
In this section, as a preliminary, we  state the Euler-Boole summation formula. It will serve as a main tool for our approach.
Then as an application, we shall prove an asymptotic series expansion of $\zeta_{E}(s,q)$ (see Theorem \ref{thm1}).
But at first, we need recall the definition of quasi--periodic Euler functions $\E_n(t),$ 
see \cite{BBD} and \cite{HKK2016}  for some details on their properties and applications.

The periodic Euler function $\E_n(t)$ is defined by
 \begin{equation}\label{ae-ft-1}
\E_n(t)=E_n(t) \quad \text{for } 0\leq t<1,
\end{equation}
where $E_n(t)$ denotes the $n$th Euler polynomials (see \eqref{Euler-polynomial}).
We may extend this definition to all $t\in\mathbb{R}$ by setting
\begin{equation}\label{ae-ft}
\E_n(t+1)=-\E_n(t)
\end{equation}
(see \cite[p. 55, (1.3)]{HKK2016}).
In this way, $\E_{n}(t)$ becomes a quasi--periodic function on $\mathbb{R}$, and it has the following Fourier series expansion
\begin{equation}\label{EF-ex}
\E_n(t)=\frac{4n!}{\pi^{n+1}}\sum_{k=0}^\infty\frac{\sin((2k+1)\pi t-\frac12\pi n)}{(2k+1)^{n+1}},
\end{equation}
where $0\leq t\leq1$ if $n\in\mathbb N$ and $0<t<1$ if $n=0$ (see \cite[p.~805, 23.1.16]{AS} and  \cite[p. 57, Lemma 2.1]{HKK2016}).

Using $\E_{n}(t)$ we state the Euler-Boole summation formula. It is obtained by Boole in 1860, but a similar one has been shown by Euler  previously (see  \cite[p. 10, (2.15)]{AC}, \cite[p. 684, Lemma 2]{BBD}, \cite[p. 1205, Theorem 1.2]{CanDa}, \cite[p. 1169, Lemma 2.1]{KMS} and \cite[24.17.1--2]{NIST}).

\begin{lemma}[Euler-Boole summation formula]\label{BSF}
Let $\alpha,\beta$ and $m$ be integers such that $\alpha<\beta$ and $m\in\mathbb N.$ If $f^{(m)}(t)$ is absolutely integrable over $[\alpha,\beta].$
Then
\begin{equation}\label{Boole}
\begin{aligned}
2\sum_{n=\alpha}^{\beta-1}(-1)^nf(n)& = \sum_{k=0}^{m-1}\frac{E_k(0)}{k!}\left((-1)^{\beta-1}f^{(k)}(\beta)+(-1)^{\alpha}f^{(k)}(\alpha) \right) \\
&\quad+\frac1{(m-1)!}\int_{\alpha}^{\beta}\E_{m-1}(-t)f^{(m)}(t)\d t,
\end{aligned}
\end{equation}
where $\E_{n}(t)$ is the $n$th quasi-periodic Euler function.
\end{lemma}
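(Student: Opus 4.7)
The strategy is to transform the remainder integral in (\ref{Boole}) by reducing to unit subintervals, integrating by parts $N$ times, and then telescoping via the alternating factor $(-1)^n$.

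First, split $\int_\alpha^\beta \E_N(-t)\, f^{(N+1)}(t)\, dt = \sum_{n=\alpha}^{\beta-1} \int_n^{n+1}$. On each unit interval set $t = n+s$ with $s \in [0, 1)$; writing $-t = -(n+1) + (1-s)$ and iterating the quasi-periodicity $\E_N(x+1) = -\E_N(x)$ gives $\E_N(-t) = (-1)^{n+1} E_N(1-s)$. The classical symmetry $E_N(1-s) = (-1)^N E_N(s)$ (immediate from (\ref{Euler-polynomial})) then collapses this to $\E_N(-t) = (-1)^{n+N+1} E_N(s)$, so the local integral becomes $(-1)^{n+N+1} \int_0^1 E_N(s)\, f^{(N+1)}(n+s)\, ds$.

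Next, integrate by parts $N$ times using $E_k'(s) = k E_{k-1}(s)$, descending from $E_N$ to $E_0 \equiv 1$:
\begin{align*}
\int_0^1 E_N(s)\, f^{(N+1)}(n+s)\, ds
&= \sum_{j=1}^N (-1)^{N-j} \frac{N!}{j!} \bigl[E_j(s)\, f^{(j)}(n+s)\bigr]_0^1 \\
&\quad + (-1)^N N!\bigl[f(n+1) - f(n)\bigr].
\end{align*}
The reflection $E_j(0) + E_j(1) = 2 \cdot 0^j$ reduces each boundary bracket, for $j \geq 1$, to $-E_j(0)[f^{(j)}(n+1) + f^{(j)}(n)]$.

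Finally, multiply by $(-1)^{n+N+1}/N!$ and sum over $n = \alpha, \dots, \beta-1$. For each $j \geq 1$, the factor $(-1)^n$ causes $\sum_{n}(-1)^n[f^{(j)}(n+1) + f^{(j)}(n)]$ to telescope to $(-1)^\alpha f^{(j)}(\alpha) + (-1)^{\beta-1} f^{(j)}(\beta)$; the $j=0$ piece gives
$$\sum_{n=\alpha}^{\beta-1}(-1)^{n+1}[f(n+1) - f(n)] = 2\sum_{n=\alpha}^{\beta-1}(-1)^n f(n) - (-1)^\alpha f(\alpha) - (-1)^{\beta-1} f(\beta),$$
whose last two terms combine with the $k=0$ boundary contribution since $E_0(0) = 1$. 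Rearranging to isolate $2\sum(-1)^n f(n)$ on the left yields (\ref{Boole}). The \emph{main subtlety} is sign bookkeeping: the coefficient of $E_j(0)$ emerging from the telescoped sum nominally carries a factor $(-1)^j$, and what makes the stated formula clean is precisely that $E_j(0) = 0$ for every even $j \geq 2$, so only odd $j$ (and $j=0$) survive, and for those the signs collapse to give the unadorned $E_k(0)/k!$ in the statement.
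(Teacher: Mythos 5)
The paper does not actually prove Lemma \ref{BSF}: it is quoted as a known result, with pointers to Can--Da\u{g}l{\i}, to Kim, and to NIST 24.17.1--2. So the only question is whether your argument stands on its own, and it does. The reduction $\E_N(-t)=(-1)^{n+N+1}E_N(s)$ on $[n,n+1]$ is correct (quasi-periodicity $\E_N(x+1)=-\E_N(x)$ supplies $(-1)^{n+1}$, the reflection $E_N(1-s)=(-1)^NE_N(s)$ supplies $(-1)^N$); the $N$-fold integration by parts with $E_k'=kE_{k-1}$ and $E_0\equiv1$ gives exactly your expansion of $\int_0^1E_N(s)f^{(N+1)}(n+s)\,ds$, with the boundary brackets reduced via $E_j(1)=-E_j(0)$ for $j\ge1$; and the telescoping of $\sum_n(-1)^n\bigl[g(n+1)+g(n)\bigr]$ to $(-1)^{\beta-1}g(\beta)+(-1)^\alpha g(\alpha)$ is right. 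Your closing remark is indeed the crux rather than a cosmetic point: after summation the $j$-th boundary term enters from the integral with coefficient $(-1)^jE_j(0)/j!$ while the displayed sum in \eqref{Boole} carries $E_j(0)/j!$, and the identity closes precisely because $E_j(0)\bigl(1+(-1)^j\bigr)=0$ for every $j\ge1$ (odd $j$ kills the factor $1+(-1)^j$, even $j\ge2$ kills $E_j(0)$); I checked that the $j=0$ piece then cancels the $k=0$ term and leaves $2\sum(-1)^nf(n)$. Two small points worth a sentence each in a final write-up: the hypothesis is only that $f^{(N+1)}$ is absolutely integrable, so you should note that $f^{(j)}$ is absolutely continuous for $j\le N$ and that integration by parts is applied in that generality; and the identification $\E_N(1-s)=E_N(1-s)$ uses $1-s\in[0,1)$, which fails only at the single point $s=0$ and hence is harmless in the integral. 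Compared with the paper's bare citation, your direct computation has the merit of making visible exactly why the even-indexed $E_k(0)$ never contribute to the boundary sum.
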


From the above lemma we get the following special case of Euler-Boole's summation formula.  
Similar problems have also been investigated in \cite[pp. 178--179]{Can}, \cite[p. 515, (2.3)]{CP}, \cite[p. 815, Theorem 1.1]{KS} and \cite[p. 17, Theorem 1.4]{Te}.

\begin{lemma}\label{lem1}
Let $f(t)$ be a real- or complex-valued function defined on $0\leq t<\infty.$ 
If the derivative $f^{(2N+2)}(t)$ is absolutely integrable over $[0,1],$ then we have the following expansion 
\begin{equation}\label{Boole-2}
f(0)=\frac12\Delta_+(0)-\frac14\Delta_+'(0)+\frac12\sum_{k=1}^{N-1}\frac{E_{2k+1}(0)}{(2k+1)!}\Delta_+^{(2k+1)}(0)+R_N,
\end{equation}
where $\Delta_+(t):=f(t+1)+f(t)$ and $R_N$ is the remainder given by the formula
\begin{equation}\label{remain}
R_{N}=\frac1{2 (2N+1)!}\int_{0}^{1}\left(E_{2N+1}(0)+\E_{2N+1}(-t)\right)f^{(2N+2)}(t)\d t.
\end{equation}
\end{lemma}
\begin{proof}
By setting  $\alpha=0$ and $\beta=1$ in (\ref{Boole}), 
we have
$$f(0) = \frac{1}{2}\sum_{k=0}^{m-1}\frac{E_k(0)}{k!}\left(f^{(k)}(1)+f^{(k)}(0) \right)
+\frac1{2 (m-1)!}\int_{0}^{1}\E_{m-1}(-t)f^{(m)}(t)\d t.$$
Then noticing  that $\Delta_+(t)=f(t+1)+f(t)$ and $E_{0}(0)=1$, $E_{1}(0)=-\frac{1}{2}$, $E_{2k}(0)=0$ for any $k\in\mathbb N$ 
(see, e.g., \cite[p. 5, Corollary 1.1]{Sun}), we get
$$f(0)=\frac12\Delta_+(0)-\frac14\Delta_+'(0)+\frac12\sum_{k=1}^{N-1}\frac{E_{2k+1}(0)}{(2k+1)!}\Delta_+^{(2k+1)}(0)+R_{N},$$
where the remainder term $R_{N}$ is given by
$$R_{N}=\frac1{2 (2N-1)!}\int_{0}^{1}\E_{2N-1}(-t)f^{(2N)}(t)\d t.$$
Thus using 
$$E_{2N-1}(t)=\frac{E_{2N}'(t)}{2N}\quad(N\in\mathbb N)$$
and integration by parts two times, we get
$$\begin{aligned}
R_{N}&=\frac1{2 (2N)!}\int_{0}^{1}\E'_{2N}(-t)f^{(2N)}(t)\d t \\
&=\frac1{2 (2N)!}\left(\left[ \E_{2N}(-t)f^{(2N)}(t)\right]_0^1-\int_0^1 \E_{2N}(-t)f^{(2N+1)}(t)\d t\right) \\
&=-\frac1{2 (2N)!}\int_{0}^{1}\E_{2N}(-t)f^{(2N+1)}(t)\d t \\
&=\frac1{2 (2N+1)!}\int_{0}^{1}\left(E_{2N-1}(0)+\E_{2N+1}(-t)\right)f^{(2N+2)}(t)\d t.
\end{aligned}$$
This completes the proof of Lemma \ref{lem1}.
\end{proof}

\begin{remark}
Using Lemma \ref{lem1}, for an alternating function $f$ which is infinitely differentiable and the derivatives are absolutely integrable,
we have  the asymptotic  expansion (see, for example,  \cite[Chapter VIII]{Hardy})
\begin{equation}\label{E-aym}
f(0)\sim\frac12\Delta_+(0)-\frac14\Delta_+'(0)+\frac12\sum_{k=1}^{\infty}\frac{E_{2k+1}(0)}{(2k+1)!}\Delta_+^{(2k+1)}(0),
\end{equation}
which is in fact Euler's summation formula (\ref{Boole}) without the remainder term. This formula can also be proved in the same way as Rudaz's case \cite[p. 2832, (7)]{Ru}.
It is remarked in \cite[p. 26]{Nor} that this formula has already been found by Euler for polynomials $f$, but without the remainder term.
Also, it should be noted that  our derivation of (\ref{E-aym}) is indeed formal, without considering its convergence
(see \cite[p. 475, (9.71)]{GKP}). Some recent developments for the remainders can be found in the references 
\cite{Lam,Ne} and \cite{seri}.
\end{remark}

\section{Main results} \label{2.2}
In this section, we state our main results, that is, we shall provide asymptotic expansions for the higher order derivatives of the alternating Hurwitz zeta function $\zeta_{E}(s,q)$ in terms of  the lower orders. 
We also present representations and bounds for the remainder of the asymptotic expansion for $\zeta_{E}(s,q)$ stated in Theorem \ref{thm1} below.
All the proofs will be shown in the next section.

First, by applying (\ref{E-aym}) we immediately get the following asymptotic expansion of $\zeta_{E}(s,q),$
as $|q|\to\infty$ in the sector $|\arg q|\leq\pi-\delta,$ with $s$ and $\delta>0$ being fixed (see \cite[p. 2833, (11)]{Ru}). 

\begin{theorem}\label{thm1}
The alternating Hurwitz zeta function has the asymptotic expansion of the form
\begin{equation}\label{main1}
\zeta_E(s,q)\sim\frac12 q^{-s}+\frac14sq^{-s-1}-\frac12q^{-s}\sum_{k=1}^\infty\frac{E_{2k+1}(0)}{(2k+1)!}\frac{(s)_{2k+1}}{q^{2k+1}},
\end{equation}
as $|q|\to\infty$ in the sector $|\arg q|\leq\pi-\delta,$ with $s$ and $\delta>0$ being fixed.
\end{theorem}

\begin{remark}
We refer to Wong's book \cite[pp. 4--42]{Wo} for general concept of asymptotic expansions. Beyond the definition, many properties 
can be found in \cite{Wo}. One may also consult the descriptions in \cite[pp. 20--21]{Er}.
\end{remark}

\begin{remark}
In a recent article \cite{HK2022}, by using the alternating Hurwitz zeta function,
we defined modified Stieltjes constants $\tilde\gamma_k(q)$ from the following Taylor expansion,
\begin{equation}\label{l-s-con}
\zeta_E(s,q)=\sum_{k=0}^\infty\frac{(-1)^k\tilde\gamma_k(q)}{k!}(s-1)^k.
\end{equation}
Then by using the asymptotic expansion 
(\ref{main1}), we  obtained an  asymptotic expansion of $\tilde\gamma_k(q).$ See \cite[Theorem 3.17]{HK2022} and its proof.
\end{remark}

For $|\arg q|\leq\pi$ and any positive integer $N\in\mathbb N,$ we define the $N$th remainder term $R_{N}(s,q)$ 
of the asymptotic expansion \eqref{main1} by the equality
\begin{equation}\label{rem-te1}
\zeta_E(s,q)=\frac12 q^{-s}-\frac{1}2q^{-s}\left(\sum_{k=0}^{N-1}\frac{E_{2k+1}(0)}{(2k+1)!}\frac{(s)_{2k+1}}{q^{2k+1}}
+R_{N}(s,q)\right)
\end{equation}
(see \cite[p. 2, (1.3)]{Ne}).

Equation (\ref{rem-te1}) implies the following known result on the special values of $\zeta_E(s,q)$ 
at $s=-n$ for $n\in\mathbb N_0$ (see \cite[p. 520, (3.20)]{Choi}, \cite[p. 188, (2.7)]{HKK} and \cite[p. 41, (3.8)]{WZ}).
An elementary proof will be given in the next section.

\begin{corollary}\label{cor1}
The values of $\zeta_E(s,q)$ at non-positive integers are expressed by:
$$\zeta_E(-n,q)=\frac12 E_n(q),\quad n\in\mathbb N_0,$$
where $E_n(q)$ denotes the $n$th Euler polynomials.
In particular, for $n=0,$ we obtain $\zeta_E(0,q)=\frac12.$
\end{corollary}

In the following, we shall consider new representations for the remainder term $R_{N}(s,q)$ in (\ref{rem-te1}).

\begin{theorem}\label{asy-pro1}
Let $N\in\mathbb N$ and {\rm Re}$(s)>-2N-2.$ Then we have
$$
R_{N}(s,q)=\frac{(s)_{2N+2}}{(2N+1)!}q^s\int_0^\infty\frac{E_{2N+1}(0)-\E_{2N+1}(-t)}{(q+t)^{s+2N+2}}{\d t}
$$
for $|\arg q|<\pi,$ where $\E_n(t)$ denotes the quasi--periodic Euler function.
Moreover, 
$$
\begin{aligned}
\frac{E_{2N+3}(0)}{(2N+3)!}\frac{(s)_{2N+3}}{q^{2N+3}}+\frac{E_{2N+1}(0)}{(2N+1)!}\frac{(s)_{2N+1}}{q^{2N+1}} 
< R_{N}(s,q)
<\frac{E_{2N+1}(0)}{(2N+1)!}\frac{(s)_{2N+1}}{q^{2N+1}}
\end{aligned}
$$
for $|\arg q|<\frac\pi2$ and {\rm Re}$(s)>-2N,$ where $N=2m-1~(m\in\mathbb N).$
\end{theorem}

We will also prove that the following expansion (see \cite[p. 3, Theorem 1.3]{Ne}).

\begin{theorem}\label{asy-thm}
If $q$ is a positive real number and $s$ is an arbitrary real number $\mathbb R$ such that $s\neq1$ and 
$s>-2N-1~(N\in\mathbb N),$ then
$$
R_{N}(s,q)=(-1)^{N+1}4\frac{(s)_{2N+1}}{\pi^{2N+2}q^{2N+1}}
\sum_{k=0}^\infty\frac{{\it\Pi}_{s+2N+1}((2k+1)\pi q)}{(2k+1)^{2N+2}}
$$
and
$$
R_{N}(s,q)=\frac{E_{2N+1}(0)}{(2N+1)!}\frac{(s)_{2N+1}}{q^{2N+1}}\theta_N(s,q),
$$
where $0<\theta_N(s,q)<1$ is a suitable number that depends on $s,q$ and $N.$
\end{theorem}

The following theorem is an analogue of \cite[p. 3, Theorem 1.2]{Ne} by Nemes.
\begin{theorem}\label{asy-pro2}
Let $N\in\mathbb N$ and $s$ be an arbitrary complex number $\mathbb C$ such that {\rm Re}$(s)>-2N-1.$ We have
\begin{equation}\label{pro2-1}
|R_{N}(s,q)|\leq
\frac{|E_{2N+1}(0)|}{(2N+1)!}\frac{|(s)_{2N+1}|}{|q|^{2N+1}}\sup_{r\geq1}|{\it \Pi}_{s+2N+1}((2r+1)\pi q)|
\end{equation}
and
\begin{equation}\label{pro2-2}
\begin{aligned}
|R_{N}(s,q)|&\leq
\frac{|E_{2N+1}(0)|}{(2N+1)!}\frac{|(s)_{2N+1}|}{|q|^{2N+1}}\frac{|s+2N+1|}{{\rm Re}(s)+2N+1} \\
&\quad\times
\sec^{{\rm Re}(s)+2N+2}\left(\frac{\arg q}{2}\right)\max\left(1,e^{-{\rm Im}(s)\arg q}\right)
\end{aligned}
\end{equation}
provided that $|\arg q|<\pi,$ where ${\it \Pi}_p(w)$ denotes  \cite[p. 2]{Ne}
\begin{equation}\label{Pi}
{\it \Pi}_p(w)=\frac{w^p}2\left(e^{\frac\pi2 {\rm i}p}e^{{\rm i}w}\Gamma(1-p,we^{\frac\pi2 {\rm i}})
+e^{-\frac\pi2 {\rm i}p}e^{-{\rm i}w}\Gamma(1-p,we^{-\frac\pi2 {\rm i}})\right).
\end{equation}
Here $\Gamma(1-p,w)$ is the incomplete gamma function.
\end{theorem}

\begin{remark}
It needs to mention that since the radius of convergence of the series in (\ref{Euler-polynomial}) equals to $\pi,$ we have a rough estimate
\begin{equation}\label{ro-est}
\frac{E_{2k+1}(0)}{(2k+1)!}=O\left(\pi^{-(2k+1)}\right)
\end{equation}
as $k\to\infty.$ This estimate can be refined by using (\ref{EF-ex}). Since the series  (\ref{EF-ex}) take values between 0 and 1,
for every $k\in\mathbb N,$ we have
\begin{equation}\label{ro-est2}
\frac1{\pi^{2k+1}} <(-1)^{k+1}\frac{E_{2k+1}(0)}{(2k+1)!}<\frac2{\pi^{2k+1}}.
\end{equation}
We can evaluate the following identity (see \cite[p. 954]{HK2019})
\begin{equation}\label{re1}
(-1)^{k+1}\frac{\pi^{2k+2}E_{2k+1}(0)}{4(2k+1)!}=\sum_{n=0}^\infty\frac1{(2n+1)^{2k+2}}=\left(1-2^{-2k-2}\right)\zeta(2k+2)
\end{equation}
for $k\in\mathbb N.$ Therefore, for sufficiently large $k\in\mathbb N,$ we obtain the approximation
\begin{equation}\label{re2}
(-1)^{k+1}\frac{\pi^{2k+2}E_{2k+1}(0)}{4(2k+1)!}\approx\zeta(2k+2).
\end{equation}
\end{remark}

\begin{theorem}\label{thm2}
The derivative of the alternating Hurwitz zeta function has the following asymptotic expansion 
$$\zeta_E'(s,q)\sim\frac14 q^{-s-1}-\zeta_E(s,q)\log q-\frac12\sum_{k=1}^\infty E_{2k+1}(0)
\sum_{j=0}^{2k}\frac{(s)_j}{j!(2k-j+1)}q^{-s-2k-1},$$
as $|q|\to\infty$ in the sector $|\arg q|\leq\pi-\delta,$ with $s$ and $\delta>0$ being fixed.
\end{theorem}

\begin{corollary}\label{n=01} 
For each $n\in\mathbb N_0$ the following asymptotic expansion holds
$$
\begin{aligned}
\zeta_E'(-n,q)\sim\frac14 q^{n-1}&-\frac12E_n(q)\log q \\
&-\frac12\sum_{k=1}^\infty E_{2k+1}(0)\sum_{j=0}^{\min(n,2k)}\binom nj\frac{(-1)^j}{2k-j+1}q^{n-2k-1},
\end{aligned}
$$
as $|q|\to\infty$ in the sector $|\arg q|\leq\pi-\delta,$ with $\delta>0$ being fixed. 
\end{corollary}

\begin{example}\label{ex-n=01}
In particular, setting $n=0$ and 1  respectively in Corollary \ref{n=01},
we obtain the expansions
$$\begin{aligned}
&\zeta_E'(0,q)\sim-\frac12\log q +\frac14q^{-1}-\frac12\sum_{k=1}^\infty\frac{E_{2k+1}(0)}{2k+1}q^{-2k-1}, \\
&\zeta_E'(-1,q)\sim-\frac12q\log q+\left(\frac14+\frac14\log q\right)+\frac12\sum_{k=1}^\infty\frac{E_{2k+1}(0)}{(2k+1)(2k)}q^{-2k},
\end{aligned}$$
as $|q|\to\infty$ in the sector $|\arg q|\leq\pi-\delta,$ with $\delta>0$ being fixed.
\end{example}

\begin{corollary}\label{cor2}
For each $n\geq2$ the following asymptotic expansion holds
$$\begin{aligned}
\zeta_E'(-n,q)&\sim\frac14 q^{n-1}-\frac12E_n(q)\log q \\
&\quad-\frac12\sum_{k=1}^{\left\lfloor \frac{n}{2}\right\rfloor} E_{2k+1}(0)\sum_{j=0}^{2k}\binom nj\frac{(-1)^j}{2k-j+1}q^{n-2k-1} \\
&\quad+(-1)^{n+1}\frac{n!}2\sum_{k=\left\lfloor \frac{n}{2}\right\rfloor+1}^\infty\frac{E_{2k+1}(0)}{(2k+1)(2k)\cdots(2k-n+1)}q^{n-2k-1},
\end{aligned}$$
as $|q|\to\infty$ in the sector $|\arg q|\leq\pi-\delta,$ with $\delta>0$ being fixed.
\end{corollary}

\begin{example}
As examples, letting $n=2,3,4$ and $5$ in Corollary \ref{cor2}, we get the following formulas:
$$
\zeta_E'(-2,q)\sim-\left(\frac12\log q\right)q^2+\left(\frac14+\frac12\log q\right)q
-\sum_{k=1}^\infty\frac{E_{2k+1}(0)}{(2k+1)(2k)(2k-1)}q^{-2k+1},
$$
$$
\begin{aligned}
\zeta_E'(-3,q)&\sim-\left(\frac12\log q\right)q^3+\left(\frac14+\frac34\log q\right)q^2-\left(\frac{11}{48}+\frac18\log q\right)
 \\
&\quad+3\sum_{k=2}^\infty\frac{E_{2k+1}(0)}{(2k+1)(2k)(2k-1)(2k-2)}q^{-2k+2},
\end{aligned}
$$
$$
\begin{aligned}
\zeta_E'(-4,q)&\sim-\left(\frac12\log q\right)q^4+\left(\frac14+\log q\right)q^3-\left(\frac{13}{24}+\frac12\log q\right)q
+\frac1{20}q^{-1}
 \\
&\quad-12\sum_{k=3}^\infty\frac{E_{2k+1}(0)}{(2k+1)(2k)(2k-1)(2k-2)(2k-3)}q^{-2k+3},
\end{aligned}
$$
$$
\begin{aligned}
\zeta_E'(-5,q)&\sim-\left(\frac12\log q\right)q^5+\left(\frac14+\frac54\log q\right)q^4-\left(\frac{47}{48}+\frac54\log q\right)q^2 \\
&\quad+\left(\frac{137}{240}+\frac14\log q\right) +\frac{17}{672}q^{-2}\\
&\quad+60\sum_{k=4}^\infty\frac{E_{2k+1}(0)}{(2k+1)(2k)(2k-1)(2k-2)(2k-3)(2k-4)}q^{-2k+4},
\end{aligned}
$$
as $|q|\to\infty$ in the sector $|\arg q|\leq\pi-\delta,$ with $\delta>0$ being fixed.
\end{example}

Denote by $\log^jq=(\log q)^j$ for $j\geq1.$ In the following, using Theorem \ref{thm1}, we may represent $\zeta_E^{(m)}(s,q),$ the higher order derivatives of the alternating Hurwitz zeta function, in terms of the lower orders (also see \cite[(11) and (13)]{EE0}).
As conventions empty sums will be taken to be zero.

\begin{theorem}\label{thm3}
For each $m\geq2$ the function $\zeta_E^{(m)}(s,q)$ has the asymptotic expansion corresponding to the derivatives of 
lower orders:
$$\begin{aligned}
\zeta_E^{(m)}(s,q)
&\sim\left[\left(\frac{\partial}{\partial s}+\log q\right)^m-\left(\frac{\partial}{\partial s}\right)^m\right]\zeta_E(s,q)
-\frac12\Sigma_m(s,q) \\
&=-\sum_{j=1}^m\binom mj\zeta_E^{(m-j)}(s,q)\log^j q
-\frac12\Sigma_m(s,q), \\
\end{aligned}$$
as $|q|\to\infty$ in the sector $|\arg q|\leq\pi-\delta,$ with $s$ and $\delta>0$ being fixed, where
\begin{equation}\label{ckm}
\Sigma_m(s,q)=\sum_{k=1}^\infty q^{-s-2k-1}c_{k}^m(s)
\end{equation}
and
$$
c_{k}^m(s)= E_{2k+1}(0)\sum_{j_1=0}^{2k}\frac1{2k-j_1+1}\sum_{j_2=0}^{j_1-1}\frac1{j_1-j_2}
\cdots\sum_{j_m=0}^{j_{m-1}-1}\frac{(s)_{j_m}}{j_m!(j_{m-1}-j_m)}.
$$
\end{theorem}

If we restrict ourselves to the particular values $s=-n~(n\in\mathbb N_0),$ then we obtain the following result
(see \cite[(14)]{EE0}).

\begin{corollary}\label{cor3}
For each $m\geq2$ the function $\zeta_E^{(m)}(s,q)$ has the asymptotic expansion by evaluating at nonpositive integer values of 
$s$ :
$$\zeta_E^{(m)}(-n,q)\sim-\sum_{j=1}^m\binom mj\zeta_E^{(m-j)}(-n,q)\log^j q-\frac12\Sigma_m(-n,q),$$
as $|q|\to\infty$ in the sector $|\arg q|\leq\pi-\delta,$ with $\delta>0$ being fixed, where $n\in\mathbb N_0,$ 
$$\Sigma_m(-n,q)=\sum_{k=1}^\infty q^{n-2k-1}c_{k}^m(-n)$$
and
$$c_{k}^m(-n)=  E_{2k+1}(0)\sum_{j_1=0}^{2k}\frac1{2k-j_1+1}\sum_{j_2=0}^{j_1-1}\frac1{j_1-j_2}
\cdots\sum_{j_m=0}^{\mu_m}\binom{n}{j_m}\frac{(-1)^{j_m}}{j_{m-1}-j_m}
$$
in which $\mu_m=\min(n,j_{m-1}-1).$
\end{corollary}

\begin{remark}
We remark here that the formula in Theorem \ref{thm2} for $\zeta_E'(s,q)$ cannot be obtained from Theorem \ref{thm3} directly by setting $m=1$ since
it contains an additional term in $q^{-s-1}.$
\end{remark}

\begin{remark}
The following result was shown by Williams and Zhang in \cite[p. 40, Proposition 3]{WZ}:
\begin{equation}\label{eu-zeta-p}
\zeta_E'(0,q)=\log\frac{\Gamma\left(\frac q2\right)}{\Gamma\left(\frac{1+q}2\right)}-\frac12\log 2.
\end{equation}
Thus by Example \ref{ex-n=01} and (\ref{eu-zeta-p}), and replacing $q$ by $2q,$ we have the following asymptotic expansion:
\begin{equation}\label{eu-type-asym}
\log\frac{\Gamma(q)}{\Gamma\left(q+\frac{1}2\right)}
\sim-\frac12\log q -\sum_{k=0}^\infty\frac{E_{2k+1}(0)}{2^{2k+2}(2k+1)}q^{-2k-1},
\end{equation}
as $|q|\to\infty$ in the sector $|\arg q|<\pi.$ We note that (\ref{eu-type-asym}) can also be deduced from
the following identity
\begin{equation}\label{b-e-eq1}
B_{n+1}\left(\frac12\right)-B_{n+1}=\frac{n+1}{2^{n+1}}E_n(0)
\end{equation}
for $n\in\mathbb N_0$ (see \cite[p. 806, 23.1.27]{AS}) and the following formula
\begin{equation}\label{c-gam-eq}
\log\Gamma\left(q+t\right)\sim\left(q+t-\frac12\right)\log q-q+\frac12\log(2\pi)
+\sum_{n=1}^\infty\frac{(-1)^{n+1}B_{n+1}(t)}{n(n+1)}q^{-n}.
\end{equation}
The above formula can be found in \cite[p. 32, (4)]{Lu} and \cite[p. 1005, (2.10)]{NE}.

Using (\ref{eu-type-asym}), (\ref{b-e-eq1}) and $E_{2k}(0)=0~(k\in\mathbb N),$ 
and applying (1.5) and Theorem 2.1 (with $t=\frac12$ and $s=0$) of \cite{BE} (also see \cite[p. 356, (1.5)]{BE}),
we get the following asymptotic expansion 
\begin{equation}\label{eu-type-asym1}
\begin{aligned}
\frac{\Gamma\left(q+\frac{1}2\right)}{\Gamma(q)}
&\sim \sqrt q \exp\left(\frac12\sum_{n=1}^\infty\frac{(-1)^{n+1}E_{n}(0)}{2^{n}n}q^{-n}\right) \\
&\sim  \sqrt q \exp\left(\sum_{n=1}^\infty\frac{(-1)^{n+1}}{n(n+1)}\left[B_{n+1}\left(\frac12\right)-B_{n+1}\right]q^{-n}\right) \\
&\sim \sqrt q \left(1-\frac{1}{8q}+\frac{1}{128q^2}+\frac{5}{1024q^3}-\frac{21}{32768q^4}+\cdots\right),
\end{aligned}
\end{equation}
as $|q|\to\infty$ in the sector $|\arg q|<\pi.$ 
This leads to the approximation
$$\frac{\Gamma\left(q+\frac{1}2\right)}{\Gamma(q)}\approx \sqrt q.$$ For (\ref{eu-type-asym1}), the following asymptotic formula is very well-known and useful (see, e.g., \cite[p. 7, (37)]{SC}):
$$\frac{\Gamma(q+\alpha)}{\Gamma(q+\beta)}=q^{\alpha-\beta}\left[ 1+\frac{(\alpha-\beta)(\alpha+\beta-1)}{2q} +O(q^{-2})\right]$$ 
$$(|q|\to\infty; ~|\arg(q)|\leq\pi-\epsilon;~|\arg(q+\alpha)|\leq\pi-\epsilon;~0<\epsilon<\pi),$$
where $\alpha$ and $\beta$ are bounded complex numbers.
We refer to \cite{BE} for detailed analysis of asymptotic expansions 
for the ratio of two gamma functions.
\end{remark}

We close this section by introducing a new exact series representation of the convergent expansion of $\zeta_E(s,q).$ 

\begin{theorem}\label{new-series}
If $\{N_k\}_{k=0}^\infty$ is an arbitrary sequence of positive integers and $s$ is any complex number $\mathbb C$ such that 
{\rm Re}$(s)>-2N_k$ for each $k\in\mathbb N_0,$ then the alternating Hurwitz zeta function has the exact expansion
$$
\begin{aligned}
\zeta_E(s,q)&=\frac12{q^{-s}}+\frac14sq^{-s-1}-2q^{-s}\left( \sum_{k=0}^\infty\sum_{n=1}^{N_k-1}(-1)^{n-1}
\frac{(s)_{2n+1}}{((2k+1)\pi)^{2n+2}q^{2n+1}} \right. \\
&\quad+\left. \sum_{k=0}^\infty(-1)^{N_k-1}\frac{(s)_{2N_k+1}{\it\Pi}_{s+2N_k+1}\left({(2k+1)\pi q}\right)}{((2k+1)\pi)^{2N_k+2}q^{2N_k+1}} \right)
\end{aligned}
$$
provided that $|\arg q|<\pi,$ where ${\it \Pi}_p(w)$ is defined by equation (\ref{Pi}).
\end{theorem}

Notice that the above representation was originally established 
by Nemes \cite[p. 3, (1.11)]{Ne} in a slightly different form for the Hurwitz zeta function $\zeta(s,q)$.
 
According to Abramowitz and Stegun's handbook \cite[pp.~807--808]{AS}, the function
\begin{equation}\label{lam-def}
\lambda(s)=\sum_{k=0}^\infty\frac1{(2k+1)^s}=(1-2^{-s})\zeta(s),\quad\text{Re}(s)>1,
\end{equation}
is usually named the Dirichlet lambda function.
In addition, the special values of $\lambda(s)$ at any even positive integer $2n\;(n\in\mathbb N)$
are calculated by the following formula:
\begin{equation}\label{lam-ft-eq}
\lambda(2n)=(-1)^{n}\frac{\pi^{2n}}{4(2n-1)!}E_{2n-1}(0)
\end{equation}
(see \cite[p. 954, (1.10)]{HK2019}). If we put $N_k=N\in\mathbb N\;(k\in\mathbb N_0)$ in Theorem \ref{new-series} and
using (\ref{lam-ft-eq}), we have the following Poincar\'e type expansion.

\begin{corollary}\label{new-series-co}
If $s$ is any complex number such that 
{\rm Re}$(s)>-2N,$ then the alternating Hurwitz zeta function has the representation
$$
\begin{aligned}
\zeta_E(s,q)&=\frac12{q^{-s}}+\frac14sq^{-s-1}-\frac12q^{-s}\sum_{n=1}^{N-1}
\frac{E_{2n+1}(0)}{(2n+1)!}\frac{(s)_{2n+1}}{q^{2n+1}} +\mathcal R_N(s,q),
\end{aligned}
$$
provided that $|\arg q|<\pi,$ where
$$\mathcal R_N(s,q)= 2q^{-s}\sum_{k=0}^\infty(-1)^{N}\frac{(s)_{2N+1}{\it\Pi}_{s+2N+1}\left({(2k+1)\pi q}\right)}
{((2k+1)\pi)^{2N+2}q^{2N+1}}$$
and ${\it \Pi}_p(w)$ is defined by equation (\ref{Pi}). Moreover, the remainder term $\mathcal R_N(s,q)$ satisfies the
estimate 
$$\mathcal R_N(s,q)=O(q^{-s-2N-1})$$ 
as $q\to\infty,$ and can be expressed as $\mathcal R_N(s,q)=-\frac12q^{-s}R_N(s,q).$
\end{corollary}

\section{Proofs of the  results}\label{proofs}

In this section, we prove the  results stated  in Section \ref{2.2}.

\subsection*{Proof of Theorem \ref{thm1}.}
We prove this theorem by using Lemma \ref{lem1} and \eqref{E-aym}.

For $|q|\to\infty$, in the sector $|\arg q|\leq\pi-\delta$ with $\delta>0$ being fixed,
we will derive an asymptotic expansion for $\zeta_E(s,q).$  
For fixed $s,q\in\mathbb C$ and for $t\in\mathbb R,$ considering the function (see \cite[p. 2833]{Ru})
$$f(t)=\zeta_E(s,q+t).$$
It satisfies all the assumption of Lemma \ref{lem1}. Then from the difference equation (\ref{J-1}) and the definition of $\Delta_+(t)=f(t+1)+f(t),$
we have
\begin{equation}\label{delta0}
\Delta_+(0)=f(1)+f(0)=\zeta_E(s,q+1)+\zeta_E(s,q)=q^{-s}
\end{equation}
and
\begin{equation}\label{delta}
\begin{aligned}
\Delta_+'(0)&=\frac{\d}{\d t}\left(f(t+1)+f(t)\right)\biggl|_{t=0} \\
&=\frac{\d}{\d t}\left(\zeta_E(s,q+t+1)+\zeta_E(s,q+t)\right)\biggl|_{t=0} \\
&=\frac{\d}{\d t}\left((q+t)^{-s}\right)\biggl|_{t=0} \\
&=-sq^{-s-1}.
\end{aligned}
\end{equation}
Repeating this procedure $k$ times, we immediately get
\begin{equation}\label{delta1}
\Delta_+^{(k)}(0)=(-1)^k(s)_kq^{-s-k}
\end{equation}
for $k\in\mathbb N_0.$
Substituting (\ref{delta0}), (\ref{delta}) and (\ref{delta1}) into Lemma  \ref{lem1} and using \eqref{E-aym}, the result follows.

\subsection*{Proof of Corollary \ref{cor1}.}
For $N>n$, by putting $s=-n~(n\in\mathbb N_0)$ in (\ref{rem-te1}), we have
$$\begin{aligned}
\zeta_E(-n,q)&=\frac12q^n-\frac12\sum_{k=1}^N\frac{E_k(0)}{k!}\frac{(-n)_k}{q^{k-n}}
-\frac12\frac{E_{N+1}(0)}{(N+1)!}\frac{(-n)_{N+1}}{q^{N-n+1}} -\cdots \\
&\quad(\text{by using $E_{k}(0)=0$ if any even $k\geq2$}) \\
&=\frac12q^n+\frac12\sum_{k=1}^N(-1)^{k+1}\binom nkE_k(0)q^{n-k} +\cdots \\
&\quad(\text{here the series terminates of itself}) \\
&=\frac12\sum_{k=0}^n\binom nkE_k(0)q^{n-k} \\
&=\frac12E_n(q),
\end{aligned}$$
where we have used $E_0(0)=1$ and the last step follows from the fact that
$E_n(q)=\sum_{k=0}^n\binom nkE_k(0)q^{n-k}$
(see \cite[p. 804, 23.1.7]{AS} and \cite[p. 3, (1.4)]{Sun}).
This is our result.

\subsection*{Proof of Theorem \ref{asy-pro1}.}
It is easy to show that the remainder term $R_{N}(s,q)$ can be expressed in the following form
\begin{equation}\label{rem}
R_{N}(s,q)=\frac{q^s}{\Gamma(s)}\int_0^\infty\left(\frac{2}{e^t+1}-1-\sum_{k=0}^{N-1}\frac{E_{2k+1}(0)}{(2k+1)!}
t^{2k+1}\right)t^se^{-qt}\frac{\d t}t
\end{equation}
provide that $|\arg q|<\frac\pi2$ and Re$(s)>-2N$ (see \cite[p. 4, (2.1)]{Ne}).
Using $E_1(0)=-\frac12$ and $E_{2k+1}(0)=-E_{2k+1}(1),$ 
it follows from \cite[(2.5)]{CP} that 
\begin{equation}\label{cp-1}
\sum_{k=0}^{2m}\frac{E_{2k+1}(0)}{(2k+1)!}t^{2k+1}<\frac{2}{e^t+1}-1<\sum_{k=0}^{2m-1}\frac{E_{2k+1}(0)}{(2k+1)!}t^{2k+1},
\end{equation}
for $t>0$ and $m\in\mathbb N.$ 
Since $E_{4m-1}(0)>0$ and $E_{4m+1}(0)<0$ for all $m\in\mathbb N,$
(\ref{cp-1}) can be written in the form
\begin{equation}\label{cp-2}
\begin{aligned}
\frac{E_{4m+1}(0)}{(4m+1)!}t^{4m+1}+\frac{E_{4m-1}(0)}{(4m-1)!}t^{4m-1}
&<\frac{2}{e^t+1}-1-\sum_{k=0}^{2m-2}\frac{E_{2k+1}(0)}{(2k+1)!} t^{2k+1}\\
&<\frac{E_{4m-1}(0)}{(4m-1)!}t^{4m-1}
\end{aligned}
\end{equation}
for $t>0$ and $m\in\mathbb N.$
Because
$$q^{-(s+\ell)}=\frac1{\Gamma(s+\ell)}\int_0^\infty t^{s+\ell} e^{-qt}\frac{\d t}{t}\quad (\text{Re}(q)>0;\,s+\ell\not\in-\mathbb N),$$
by (\ref{rem}) and  (\ref{cp-2}) we have
\begin{equation}\label{cp-3}
\begin{aligned}
\frac{E_{4m+1}(0)}{(4m+1)!}\frac{\Gamma(s+4m+1)}{\Gamma(s)}\frac1{q^{4m+1}}&+\frac{E_{4m-1}(0)}{(4m-1)!}\frac{\Gamma(s+4m-1)}{\Gamma(s)}\frac1{q^{4m-1}} \\
&< R_{2m-1}(s,q)\\
&<\frac{E_{4m-1}(0)}{(4m-1)!}\frac{\Gamma(s+4m-1)}{\Gamma(s)}\frac1{q^{4m-1}}.
\end{aligned}
\end{equation}
Setting $N=2m-1~(m\in\mathbb N)$ in (\ref{cp-3}), we obtain the following estimation for the remainder term $R_{N}(s,q)$
\begin{equation}\label{cp-4}
\begin{aligned}
\frac{E_{2N+3}(0)}{(2N+3)!}\frac{(s)_{2N+3}}{q^{2N+3}}+\frac{E_{2N+1}(0)}{(2N+1)!}\frac{(s)_{2N+1}}{q^{2N+1}} 
< R_{N}(s,q)
<\frac{E_{2N+1}(0)}{(2N+1)!}\frac{(s)_{2N+1}}{q^{2N+1}} .
\end{aligned}
\end{equation}
In what follows, we apply Lemma \ref{BSF} to study  the remainder term $R_{N}(s,q)$ by letting $\alpha=0,\beta\to\infty$ and $f(t)=(t+q)^{-s}$ with Re$(s)>0$.
The $k$th derivative of $f(t)$ is expressed by
$$f^{(k)}(t)=(-1)^k(s)_k(t+q)^{-(s+k)},$$
thus $f^{(k)}(\beta)\to0$ as $\beta\to\infty.$ Replacing these ingredients by (\ref{Boole}), we obtain the following 
formula
\begin{equation}\label{a-boole}
\begin{aligned}
\zeta_E(s,q)&=\frac12\sum_{k=0}^{m-1}(-1)^k\frac{E_k(0)}{k!}\frac{(s)_k}{q^{s+k}}
+\frac{(-1)^m(s)_m}{2(m-1)!}\int_0^\infty\frac{\E_{m-1}(-t)}{(q+t)^{s+m}}\d t \\
&=\frac12 q^{-s}-\frac12q^{-s}\sum_{k=0}^{N-1}\frac{E_{2k+1}(0)}{(2k+1)!}\frac{(s)_{2k+1}}{q^{2k+1}} \\
&\qquad\qquad\qquad\qquad\qquad
+\frac{(-1)^{2N}(s)_{2N}}{2(2N-1)!}\int_0^\infty\frac{\E_{2N-1}(-t)}{(q+t)^{s+2N}}\d t.
\end{aligned}
\end{equation}
Since $E_{2k}(0)=0$ for $k\in\mathbb N,$ comparing (\ref{rem-te1}) and (\ref{a-boole}),
it is easily seen that $R_{N}(s,q)$ can be written in the integral representation
\begin{equation}\label{rem2}
R_{N}(s,q)=-\frac{(s)_{2N}}{(2N-1)!}q^s\int_0^\infty\frac{\E_{2N-1}(-t)}{(q+t)^{s+2N}}{\d t},
\end{equation}
which is valid for $|q|>0$ and Re$(s)>-2N.$ Integrating by parts two times and by using the property \cite[(2.2)]{HKK2016}
$$E_N(t)=\frac{E'_{N+1}(t)}{N+1}\quad(N\in\mathbb N_0),$$
we have
\begin{equation}\label{parts}
\begin{aligned}
R_{N}(s,q)&=-\frac{(s)_{2N}}{(2N-1)!}q^s\left(\frac1{2N}\frac{E_{2N}(0)}{q^{s+2N}}
-\frac{s+2N}{2N}\int_0^\infty\frac{\E_{2N}(-t)}{(q+t)^{s+2N+1}}{\d t}\right) \\
&=\frac{(s)_{2N+1}}{(2N)!}q^s
\int_0^\infty\frac{\E_{2N}(-t)}{(q+t)^{s+2N+1}}{\d t} \\
&=\frac{E_{2N+1}(0)}{(2N+1)!}\frac{(s)_{2N+1}}{q^{2N+1}} - \frac{(s)_{2N+2}}{(2N+1)!}q^s
\int_0^\infty\frac{\E_{2N+1}(-t)}{(q+t)^{s+2N+2}}{\d t} \\
&=\frac{(s)_{2N+2}}{(2N+1)!}q^s\int_0^\infty\frac{E_{2N+1}(0)-\E_{2N+1}(-t)}{(q+t)^{s+2N+2}}{\d t},
\end{aligned}
\end{equation}
since $E_{2N}(0)=0$ for $N\in\mathbb N.$
The analytic continuation procedure then allows one to conclude that \eqref{parts} is valid under the more general conditions
$|\arg q|<\pi$ and Re$(s)>-2N-2$ (see \cite[(2.6) and (2.7)]{Ne} and \cite[\S64]{Kn}).
This competes the proof.

\subsection*{Proof of Theorem \ref{asy-thm}.}
For the proof, first we note that (see \cite[(2.4)]{CP1})
\begin{equation}\label{E-B}
E_{2k-1}(0)=\frac{2(1-2^{2k})}{2k}B_{2k}\quad\text{and}\quad E_{2k-1}(1)=-E_{2k-1}(0)
\end{equation}
for $k\in\mathbb N,$ where $B_k$ is the Bernoulli numbers.
Then for $t>0$ and $N\in\mathbb N,$ we have (see \cite[Theorem 2.1]{CP1})
\begin{equation}\label{asy-thm-pf1}
\frac{2}{e^t+1}-1-\sum_{k=0}^{N-1}\frac{E_{2k+1}(0)}{(2k+1)!}t^{2k+1}=(-1)^{N+1}t^{2N+1}s_N(t),
\end{equation}
where $s_N(t)$ is given by
\begin{equation}\label{asy-thm-pf2}
s_N(t)=\frac4{\pi^{2N}}\sum_{k=0}^\infty\frac{1}{(t^2+\pi^2(2k+1)^2)(2k+1)^{2N}}.
\end{equation}
Suppose for a moment that $q$ is real and positive. Then substituting the right-hand side of (\ref{asy-thm-pf1}) into (\ref{rem}), we get
\begin{equation}\label{asy-thm-pf3}
\begin{aligned}
R_N(s,q)&=\frac{q^s}{\Gamma(s)}\frac4{\pi^{2N}}\int_0^\infty
\sum_{k=0}^\infty\frac{(-1)^{N+1}t^{2N+1}}{(t^2+\pi^2(2k+1)^2)(2k+1)^{2N}}t^se^{-qt}\frac{\d t}t \\
&=\frac{q^s}{\Gamma(s)}\frac4{\pi^{2N}}\sum_{k=0}^\infty\int_0^\infty
\frac{(-1)^{N+1}t^{2N+1}}{(t^2+\pi^2(2k+1)^2)(2k+1)^{2N}}t^se^{-qt}\frac{\d t}t \\
&=(-1)^{N+1}\frac{4}{\Gamma(s)}\frac{1}{\pi^{2N+2}q^{2N+1}}\sum_{k=0}^\infty\int_0^\infty
\frac{u^{s+2N+1}}{1+\left(\frac u{\pi q}\right)^2}\frac{e^{-(2k+1)u}}{(2k+1)^{1-s}}\frac{\d u}u,
\end{aligned}
\end{equation}
where the third equality is obtained by changing the integration variable from $t$ to $u$ with $u=qt/(2k+1)$
(see \cite[p. 4, (2.3)]{Ne}).
It needs to mention that in the above reasoning, changing the orders of summation and integration is allowed  because of  the absolute convergence of the series.
The following integral representation can be found in \cite[p. 12, (A 2)]{Ne}
\begin{equation}\label{pi-def}
{\it\Pi}_p(w)=\frac1{\Gamma(p)}\int_0^\infty\frac{t^{p}e^{-t}}{1+\left(\frac tw\right)^2}\frac{\d t}t,
\end{equation}
which is valid for $|\arg w|<\pi/2$ and Re$(p)>0.$ Moreover, when $w\neq1$ and $p$ are positive, we find that (see \cite[p. 12, Proposition A.1]{Ne})
\begin{equation}\label{pi-pro}
0<{\it\Pi}_p(w)<1.
\end{equation}
Also, by using (\ref{pi-def}) and making a change of variables from $u$ to $v$ with $v=(2k+1)u$ in the third equality of (\ref{asy-thm-pf3}), we deduce that 
\begin{equation}\label{int-lem}
\begin{aligned}
R_N(s,q)
&=\frac{(-1)^{N+1}4}{\pi^{2N+2}q^{2N+1}}\frac{1}{\Gamma(s)}
\sum_{k=0}^\infty\int_0^\infty\frac{\left(\frac{v}{2k+1}\right)^{s+2N}e^{-v}}{1+\left(\frac{v}{(2k+1)\pi q}\right)^2}
\frac{1}{(2k+1)^{2-s}} \d v \\
&=\frac{(-1)^{N+1}4}{\pi^{2N+2}q^{2N+1}}\frac{1}{\Gamma(s)}
\sum_{k=0}^\infty\int_0^\infty\frac{v^{s+2N+1}e^{-v}}{1+\left(\frac{v}{(2k+1)\pi q}\right)^2}\frac{1}{(2k+1)^{2N+2}}\frac{\d v}v \\
&=\frac{(-1)^{N+1}4}{\pi^{2N+2}q^{2N+1}}\frac{\Gamma(s+2N+1)}{\Gamma(s)}
\sum_{k=0}^\infty\frac{{\it\Pi}_{s+2N+1}((2k+1)\pi q)}{(2k+1)^{2N+2}}
\end{aligned}
\end{equation}
(see \cite[p. 5, (2.5)]{Ne}), and the proof for the first part is complete. 
Now we go to the second part. 
From \cite[p. 824]{KS} and (\ref{E-B}) we obtain the identity
\begin{equation}\label{beta-euler}
\sum_{k=0}^\infty\frac1{(2k+1)^{2N}}=\frac{(-1)^N\pi^{2N}E_{2N-1}(0)}{4(2N-1)!}
\end{equation}
for $N\in\mathbb N.$
Therefore, by using (\ref{pi-pro}), (\ref{int-lem}) and (\ref{beta-euler}),  we can assert that (see \cite[p. 6]{Ne})
\begin{equation}\label{pf-final}
\begin{aligned}
R_N(s,q)
&=\frac{(-1)^{N+1}4(s)_{2N+1}}{\pi^{2N+2}q^{2N+1}}
\sum_{k=0}^\infty\frac{{\it\Pi}_{s+2N+1}((2k+1)\pi q)}{(2k+1)^{2N+2}} \\
&=\frac{(-1)^{N+1}4(s)_{2N+1}}{\pi^{2N+2}q^{2N+1}}
\sum_{k=0}^\infty\frac{1}{(2k+1)^{2N+2}}\theta_N(s,q) \\
&=\frac{E_{2N+1}(0)}{(2N+1)!}\frac{(s)_{2N+1}}{q^{2N+1}}\theta_N(s,q),
\end{aligned}
\end{equation}
where $0<\theta_N(s,q)<1$ is a suitable number which depends on $s,q$ and $N.$
This completes the proof of Theorem \ref{asy-thm}.

\subsection*{Proof of Theorem \ref{asy-pro2}.}
The proof follows along the same lines as the proofs of (1.7) and (1.8) in \cite[Theorem 1.2]{Ne}.

First we prove (\ref{pro2-1}). Suppose that $N\in\mathbb N$ and $s$ is an arbitrary complex number with
Re$(s)>-2N,$ from (\ref{beta-euler}) and (\ref{pf-final}), 
\begin{equation}\label{pro2-pf1}
\begin{aligned}
|R_N(s,q)|&\leq
4\frac{|(s)_{2N+1}|}{\pi^{2N+2}q^{2N+1}}\sum_{k=0}^\infty\frac{|{\it\Pi}_{s+2N+1}((2k+1)\pi q)|}{(2k+1)^{2N+2}} \\
&\leq
4\frac{|(s)_{2N+1}|}{\pi^{2N+2}q^{2N+1}}\sum_{k=0}^\infty\frac{1}{(2k+1)^{2N+2}} \sup_{r\geq1}|{\it\Pi}_{s+2N+1}((2r+1)\pi q)| \\
&=\frac{|E_{2N+1}(0)|}{(2N+1)!}\frac{|(s)_{2N+1}|}{|q|^{2N+1}}\sup_{r\geq1}|{\it \Pi}_{s+2N+1}((2r+1)\pi q|.
\end{aligned}
\end{equation}
This finished the proof of (\ref{pro2-1}).

The bound (\ref{pro2-2}) can be shown as follows. 
For any positive real $t$ and $|\arg q|<\pi$, we have (see \cite[p. 6, (3.2) and (3.3)]{Ne}),
\begin{equation}\label{ne-32}
\frac1{|(q+t)^{s+2N+2}|}\leq \frac{1}{|q+t|^{{\rm Re}(s)+2N+2}}\max\left(1,e^{{\rm Im}(s)\arg q}\right)
\end{equation}
and
\begin{equation}\label{ne-33}
|q+t|^2\geq(t+|q|)^2\cos^2\left(\frac{\arg q}{2}\right).
\end{equation}
Also, by (\ref{EF-ex}), we obtain
\begin{equation}\label{ne-34}
\begin{aligned}
(-1)^N(E_{2N+1}(0)-\E_{2N+1}(-t))&=\frac{4(2N+1)!}{\pi^{2N+2}}\sum_{k=0}^\infty\frac{1-\cos((2k+1)\pi t)}{(2k+1)^{2N+2}}\geq0
\end{aligned}
\end{equation}
for any $t>0.$ Hence if $N\geq1$ is fixed, then the function $E_{2N+1}(0)-\E_{2N+1}(-t)$ does not change the sign.
In the following, we follow the arguments of \cite[(3.5)]{Ne} by considering (\ref{parts}), (\ref{ne-32}), (\ref{ne-33}) and (\ref{ne-34}).
From Theorem \ref{asy-pro1},  we have  
\begin{equation}\label{ne-35}
\begin{aligned}
|R_N(s,q)|&\leq
\frac{|(s)_{2N+2}|}{(2N+1)!}|q^s|\int_0^\infty \frac{|E_{2N+1}(0)-\E_{2N+1}(-t)|}{|(q+t)^{s+2N+2}|} \d t \\
&\leq
\frac{|(s)_{2N+2}|}{(2N+1)!}|q^s|\left| \int_0^\infty \frac{E_{2N+1}(0)-\E_{2N+1}(-t)}{|q+t|^{{\rm Re}(s)+2N+2}} \d t \right|
\max\left(1,e^{{\rm Im}(s)\arg q}\right)
\\
&\leq
\frac{|(s)_{2N+2}|}{(2N+1)!}|q^s|\left| \int_0^\infty \frac{E_{2N+1}(0)-\E_{2N+1}(-t)}{(|q|+t)^{{\rm Re}(s)+2N+2}} \d t \right|  \\
&\quad\times\sec^{{\rm Re}(s)+2N+2}\left(\frac{\arg q}{2}\right)\max\left(1,e^{{\rm Im}(s)\arg q}\right)
\\
&=\frac{|(s)_{2N+2}|}{({\rm Re}(s))_{2N+2}}\frac{|q^s|}{|q|^{{\rm Re}(s)}}|R_N({\rm Re}(s),|q|)|\sec^{{\rm Re}(s)+2N+2}\left(\frac{\arg q}{2}\right) \\
&\quad\times\max\left(1,e^{{\rm Im}(s)\arg q}\right) \\
&=\frac{|(s)_{2N+2}|}{({\rm Re}(s))_{2N+2}}|R_N({\rm Re}(s),|q|)|\sec^{{\rm Re}(s)+2N+2}\left(\frac{\arg q}{2}\right) \\
&\quad\times\max\left(1,e^{-{\rm Im}(s)\arg q}\right).
\end{aligned}
\end{equation}
And by Theorem \ref{asy-thm}, the factor $|R_N({\rm Re}(s),|q|)|$ may be bounded as follows
\begin{equation}\label{ne-36}
\begin{aligned}
|R_N({\rm Re}(s),|q|)|&\leq \frac{|E_{2N+1}(0)|}{(2N+1)!}\frac{({\rm Re}(s))_{2N+1}}{|q|^{2N+1}} \\
&= \frac{|E_{2N+1}(0)|}{(2N+1)!}\frac{|(s)_{2N+1}|}{|q|^{2N+1}}\frac{({\rm Re}(s))_{2N+2}}{|(s)_{2N+2}|}
\frac{|s+2N+1|}{{\rm Re}(s)+2N+1}
\end{aligned}
\end{equation}
(see \cite[p. 7]{Ne}).
Substituting this inequality into (\ref{ne-35}) yields the desired result.

\subsection*{Proof of Theorem \ref{thm2}.}

To prove this, we need the following identity on the Pochhammer symbol,
which we will be often used in the subsequent:
\begin{equation}\label{second}
\frac{\d}{\d s}\left(\frac{(s)_k}{k!}\right)=\sum_{j=0}^{k-1}\frac{(s)_j}{j!(k-j)},
\end{equation}
where $k\in\mathbb N_0$ (see \cite[p. 3223, (4)--(5)]{EE0} and \cite[p. 25, (2.7)--(2.8)]{EE1}).

Now we are at the position to prove Theorem \ref{thm2}. 
By fixing $s,q\in\mathbb C$ and for $t\in\mathbb R,$ we consider the auxiliary function
$$f(t)=\frac{\partial}{\partial s}\zeta_E(s,q+t),$$
which satisfies all the assumption of Theorem \ref{thm2}
(see \cite[p. 2833]{Ru}).
Then applying (\ref{J-1}), we immediately get
\begin{equation}\label{zeta-p-1}
\begin{aligned}
\Delta_+(t)&=\frac{\partial}{\partial s}\left(\zeta_E(s,q+t+1)+\zeta_E(s,q+t)\right)=\frac{\partial}{\partial s}(q+t)^{-s} \\
&=\frac{\partial}{\partial s}\left(e^{-s\log(q+t)}\right)=-(q+t)^{-s}\log(q+t)
\end{aligned}
\end{equation}
and setting $t=0$ in (\ref{zeta-p-1}),we have
\begin{equation}\label{zeta-p-2}
\begin{aligned}
\Delta_+(0)=-q^{-s}\log q.
\end{aligned}
\end{equation}
By (\ref{zeta-p-1})  
\begin{equation}\label{zeta-p-3}
\begin{aligned}
\Delta_+'(t)=-(q+t)^{-s-1}+s(q+t)^{-s-1}\log(q+t).
\end{aligned}
\end{equation}
Then letting  $t=0$ in (\ref{zeta-p-3}), we obtain
\begin{equation}\label{zeta-p-4}
\begin{aligned}
\Delta_+'(0)=-q^{-s-1}+sq^{-s-1}\log q.
\end{aligned}
\end{equation}
Similarly, we have
\begin{equation}\label{zeta-p-5}
\begin{aligned}
\Delta_+^{(2)}(t)&=(s+1)(q+t)^{-s-2}+s(q+t)^{-s-2} \\
&\quad-s(s+1)(q+t)^{-s-2}\log(q+t)
\end{aligned}
\end{equation}
and
\begin{equation}\label{zeta-p-6}
\begin{aligned}
\Delta_+^{(3)}(t)&=-(z+1)(z+2)(q+t)^{-z-3}-z(z+2)(q+t)^{-z-3} \\
&\quad-z(z+1)(q+t)^{-z-3}+z(z+1)(z+2)(q+t)^{-z-3}\log(q+t).
\end{aligned}
\end{equation}
This suggests the following general formula for positive integers $k\geq2,$
\begin{equation}\label{zeta-p-k}
\begin{aligned}
\Delta_+^{(k)}(t)&=(-1)^k\sum_{j=0}^{k-1}\frac{(s)_k}{s+j}(q+t)^{-s-k} \\
&\quad+(-1)^{k+1}(s)_k(q+t)^{-s-k}\log(q+t).
\end{aligned}
\end{equation}
It is easy to see that
\begin{equation}\label{sum-co}
\sum_{j=0}^{k-1}\frac{(s)_k}{s+j}=\frac{\d}{\d s}((s)_k),
\end{equation}
so by \eqref{zeta-p-k} and \eqref{sum-co}, we have
\begin{equation}\label{zeta-p-k-1}
\begin{aligned}
\Delta_+^{(k)}(t)
&=(-1)^k\frac{\d}{\d s}((s)_k)(q+t)^{-s-k} \\
&\quad+(-1)^{k+1}(s)_k(q+t)^{-s-k}\log(q+t) \\
&=(-1)^kk!\sum_{j=0}^{k-1}\frac{(s)_j}{j!(k-j)}(q+t)^{-s-k} \\
&\quad+(-1)^{k+1}(s)_k(q+t)^{-s-k}\log(q+t),
\end{aligned}
\end{equation}
the last equality follows from \eqref{second}. Setting $t=0$ in (\ref{zeta-p-k-1}), we get
\begin{equation}\label{zeta-p-k-1-0}
\begin{aligned}
\Delta_+^{(k)}(0)
&=(-1)^kk!\sum_{j=0}^{k-1}\frac{(s)_j}{j!(k-j)}q^{-s-k} +(-1)^{k+1}(s)_kq^{-s-k}\log q.
\end{aligned}
\end{equation}
From (\ref{zeta-p-2}), (\ref{zeta-p-4}), and (\ref{zeta-p-k-1-0}) with $k$ replaced by $2k+1,$
 \eqref{E-aym}  implies that
\begin{equation}\label{zeta-fi-dir}
\begin{aligned}
\zeta_E'(s,q)&\sim-\left(\frac12\log q\right)q^{-s}+\frac14\left(1- s\log q\right)q^{-s-1} \\
&\quad-\frac12\sum_{k=1}^\infty E_{2k+1}(0)\sum_{j=0}^{2k}\frac{(s)_j}{j!(2k-j+1)}q^{-s-2k-1} \\
&\quad+\frac12\sum_{k=1}^\infty E_{2k+1}(0)\frac{(s)_{2k+1}}{(2k+1)!}q^{-s-2k-1}\log q
\end{aligned}
\end{equation}
provided that $|q|\to\infty$ in the sector $|\arg q|<\pi.$
From Theorem \ref{thm1},  the second infinite sum of the right-hand side of \eqref{zeta-fi-dir} has an asymptotic expansion
\begin{equation}\label{thm1-eq}
\left(\frac12 q^{-s}+\frac14 s q^{-s-1}-\zeta_E(s,q)\right)\log q.
\end{equation}
Substituting  (\ref{thm1-eq}) into (\ref{zeta-fi-dir}), as $|q|\to\infty$ in the sector $|\arg q|<\pi,$ we obtain
$$\zeta_E'(s,q)\sim\frac14 q^{-s-1}-\zeta_E(s,q)\log q-\frac12\sum_{k=1}^\infty E_{2k+1}(0)\sum_{j=0}^{2k}
\frac{(s)_j}{j!(2k-j+1)}q^{-s-2k-1},$$
which completes the proof.

\subsection*{Proof of Corollary \ref{n=01}.}
Setting $s=-n$ for integer $n\in\mathbb N_0$ in Theorem \ref{thm2}. Using Corollary \ref{cor1} and
$$(-n)_j=(-1)^j\frac{\Gamma(n+1)}{\Gamma(n+1-j)}=(-1)^j\frac{n!}{(n-j)!},$$
we obtain the asymptotic expansion
\begin{equation}\label{d-sum}
\begin{aligned}
\zeta_E'(-n,q)&\sim\frac14 q^{n-1}-\frac12E_n(q)\log q \\
&\quad-\frac12\sum_{k=1}^\infty E_{2k+1}(0)\sum_{j=0}^{2k}\binom nj\frac{(-1)^j}{2k-j+1}q^{n-2k-1}
\\
&\sim\frac14 q^{n-1}-\frac12E_n(q)\log q \\
&\quad-\frac12\sum_{k=1}^\infty E_{2k+1}(0)\sum_{j=0}^{\min(n,2k)}\binom nj\frac{(-1)^j}{2k-j+1}q^{n-2k-1},
\end{aligned}
\end{equation}
as $|q|\to\infty$ provided that $|\arg q|<\pi.$
We have the desired result.

\subsection*{Proof of Corollary \ref{cor2}.}
Let $n\geq2.$ It is easy to see that the following double sum identity holds
\begin{equation}\label{s-sum}
\sum_{k=1}^\infty \sum_{j=0}^{\min(n,2k)}=\sum_{k=1}^{\left\lfloor \frac{n}{2}\right\rfloor}\sum_{j=0}^{2k}
+\sum_{k=\left\lfloor \frac{n}{2}\right\rfloor+1}^\infty\sum_{j=0}^n
\end{equation}
for $\lfloor x\rfloor =\max\{m\in \mathbb {Z} \mid m\leq x\}.$
Furthermore, it is interesting to notice that
\begin{equation}\label{beta-like}
\begin{aligned}
\sum_{j=0}^n\binom nj\frac{(-1)^j}{j+s}
&=\sum_{j=0}^n\binom nj(-1)^j\int_0^1t^{s+j-1}\d t =\frac{\Gamma(s)\Gamma(n+1)}{\Gamma(s+n+1)} \\
&=\frac{n!}{s(s+1)(s+2)\cdots(s+n)}.
\end{aligned}
\end{equation}
Setting $s=-k~(k>n)$ in \eqref{beta-like}, we obtain (see \cite[p. 2833, (16)]{Ru})
\begin{equation}\label{beta-like2}
\begin{aligned}
\sum_{j=0}^n\binom nj\frac{(-1)^j}{k-j}
&=\frac{(-1)^{n}n!}{k(k-1)(k-2)\cdots(k-n)}.
\end{aligned}
\end{equation}
Now the result follows  from (\ref{d-sum}), (\ref{s-sum}) and \eqref{beta-like2}. 

\subsection*{Proof of Theorem \ref{thm3}.}
First, we extend the proof of Theorem \ref{thm2} to obtain an asymptotic expansion which represents the higher order derivatives of the alternating Hurwitz (or Hurwitz-type Euler) zeta function in terms of the lower orders.
Recall our notation
\begin{equation}\label{any-der}
\zeta_{E}^{(m)}(s,q)\equiv\frac{\partial^m}{\partial s^m}\zeta_E(s,q)
\end{equation}
for $m\geq2.$
Now, repeating, for  $\zeta_E''(s,q),$ the same procedure just used for $\zeta_E'(s,q)$ in the proof of Theorem \ref{thm2}, we arrive at the following expansion
\begin{equation}\label{thm1-eq-re}
\begin{aligned}
\zeta_E''(s,q)&\sim-\frac14 q^{-s-1}\log q-\zeta_E'(s,q)\log q \\
&\quad+\frac12\sum_{k=1}^\infty E_{2k+1}(0)\sum_{j_1=0}^{2k}\frac{(s)_{j_1}}{j_1!(2k-j_1+1)}q^{-s-2k-1}\log q \\
&\quad-\frac12\sum_{k=1}^\infty E_{2k+1}(0)\sum_{j_1=0}^{2k}\frac{1}{2k-j_1+1}
\sum_{j_2=0}^{j_1-1}\frac{(s)_{j_2}}{j_2!(j_1-j_2)}q^{-s-2k-1}.
\end{aligned}
\end{equation}
In fact, the above identity (\ref{thm1-eq-re}) follows directly from Theorem \ref{thm2} and \eqref{second}.
Note that by Theorem \ref{thm2},
the first infinite sum on the right-hand side of \eqref{thm1-eq-re} has an asymptotic expansion
\begin{equation}\label{thm2-eq}
\left(\frac14 q^{-s-1}-\zeta_E(s,q)\log q-\zeta_E'(s,q)\right)\log q.
\end{equation}
Substituting (\ref{thm2-eq}) into  (\ref{thm1-eq-re}) and using (\ref{ckm}), we get
\begin{equation}\label{two-comp}
\begin{aligned}
\zeta_E''(s,q)&\sim-2\zeta_E'(s,q)\log q -\zeta_E(s,q)\log^2 q \\
&\quad-\frac12\sum_{k=1}^\infty E_{2k+1}(0)\sum_{j_1=0}^{2k}\frac{1}{2k-j_1+1}
\sum_{j_2=0}^{j_1-1}\frac{(s)_{j_2}}{j_2!(j_1-j_2)}q^{-s-2k-1} \\
&\sim-2\zeta_E'(s,q)\log q -\zeta_E(s,q)\log^2 q-\frac12\Sigma_2(s,q)
\end{aligned}
\end{equation}
provided that $|q|\to\infty$ and $|\arg q|<\pi.$

With some additional effort, the following general recurrence can be derived for $m\geq 2$, which yields an asymptotic expansion for the higher order derivatives
of the alternating Hurwitz zeta function in terms of the lower orders: 
\begin{equation}\label{m-der-pf}
\zeta_E^{(m)}(s,q)\sim-\sum_{j=1}^m\binom mj\zeta_E^{(m-j)}(s,q)\log^j q-\frac12\Sigma_m(s,q)
\end{equation}
provided that $|q|\to\infty$ in the sector $|\arg q|<\pi,$
where $\Sigma_m(s,q)$ being given by (\ref{ckm}).

The proof of (\ref{m-der-pf}) proceeds by an induction on $m.$
The case $m=2$ is clear by (\ref{two-comp}).
Also, by (\ref{ckm}) and \eqref{second}  it is easily seen that
\begin{equation}\label{pf-1}
\frac{\partial}{\partial s}
c_{k}^m(s)=c_{k}^{m+1}(s).
\end{equation}
Applying the inductive hypothesis, (\ref{m-der-pf}) and (\ref{pf-1}), we obatin
$$
\begin{aligned}
\zeta_E^{(m+1)}(s,q)
&\sim-\frac{\partial}{\partial s}\left(\sum_{j=1}^m\binom mj\zeta_E^{(m-j)}(s,q)\log^j q+\frac12\Sigma_m(s,q)\right) \\
&\quad(\text{by the inductive hypothesis (\ref{m-der-pf})}) 
\\
&\sim-\sum_{j=1}^m\binom mj\zeta_E^{(m-j+1)}(s,q)\log^j q +\frac12\sum_{k=1}^\infty  c_{k}^{m}(s)q^{-s-2k-1}\log q \\
&\quad-\frac12\sum_{k=1}^\infty  \frac{\partial}{\partial s}\left(c_{k}^m(s)\right)q^{-s-2k-1}
\\
&\sim-\sum_{j=1}^m\binom mj\zeta_E^{(m-j+1)}(s,q)\log^j q - \sum_{j=1}^m\binom mj\zeta_E^{(m-j)}(s,q)\log^{j+1} q \\
&\quad-\zeta_E^{(m)}(s,q)\log q-\frac12\sum_{k=1}^\infty  \frac{\partial}{\partial s}\left(c_{k}^m(s)\right)q^{-s-2k-1} 
\\
&\quad(\text{by using (\ref{m-der-pf}) again}) 
\\
&\sim-\binom{m+1}1\zeta_E^{(m)}(s,q)\log q \\
&\quad-\sum_{j=1}^{m-1}\left(\binom{m}{j+1} +\binom m{j}\right) \zeta_E^{(m-j)}(s,q)\log^{j+1}q  \\
&\quad -\binom{m+1}{m+1}\zeta_E(s,q)\log^{m+1} q
-\frac12\sum_{k=1}^\infty c_{k}^{m+1}(s)q^{-s-2k-1} \\
&\quad(\text{by  (\ref{pf-1})})
\\
&\sim-\sum_{j=1}^{m+1}\binom {m+1}j\zeta_E^{(m-j+1)}(s,q)\log^j q-\frac12\sum_{k=1}^\infty  c_{k}^{m+1}(s)q^{-s-2k-1}
\end{aligned}
$$
and this completes the inductive argument.

\subsection*{Proof of Theorem \ref{new-series}.}

Suppose that $\{N_k\}_{k=0}^\infty$ is an arbitrary sequence of positive integers and $s$ is any complex number such that 
Re$(s)>-2N_k$ for each $k\in\mathbb N_0.$
When $N=1,$ (\ref{int-lem}) is equivalent to
\begin{equation}\label{fin-rem0}
R_1(s,q)=\frac{4}{\pi^{4}q^{3}}\frac{1}{\Gamma(s)}\sum_{k=0}^\infty \frac{1}{(2k+1)^{4}}
\int_0^\infty\frac{v^{s+3}e^{-v}}{1+\left(\frac{v}{(2k+1)\pi q}\right)^2}
\frac{\d v}v.
\end{equation}
Since we have the geometrical series
\begin{equation}\label{fin-rem1}
\begin{aligned}
\frac1{1+\left(\frac{v}{(2k+1)\pi q}\right)^2}&=\sum_{n=1}^{N_k-1}(-1)^{n-1}\left(\frac{v}{(2k+1)\pi q}\right)^{2n-2} \\
&\quad+(-1)^{N_k-1}\frac1{1+\left(\frac{v}{(2k+1)\pi q}\right)^2}\left(\frac{v}{(2k+1)\pi q}\right)^{2N_k-2},
\end{aligned}
\end{equation}
substituting  the right-hand side of (\ref{fin-rem1}) into (\ref{fin-rem0}) we get
\begin{equation}\label{fin-rem2}
\begin{aligned}
R_1(s,q)&=4\sum_{k=0}^\infty\sum_{n=1}^{N_k-1}(-1)^{n-1}
\frac{(s)_{2n+1}}{((2k+1)\pi)^{2n+2}q^{2n+1}} \\
&\quad+4\sum_{k=0}^\infty(-1)^{N_k-1}\frac{(s)_{2N_k+1}{\it\Pi}_{s+2N_k+1}\left({(2k+1)\pi q}\right)}{((2k+1)\pi)^{2N_k+2}q^{2N_k+1}},
\end{aligned}
\end{equation}
which we have used the equation (\ref{pi-def}).
From (\ref{rem-te1}) and (\ref{fin-rem2}), we deduce a new series representation of the alternating Hurwitz zeta function 
$\zeta_E(s,q)$:
\begin{equation}\label{ex-asym}
\begin{aligned}
\zeta_E(s,q)&=\frac12{q^{-s}}+\frac14sq^{-s-1}-2q^{-s}\left( \sum_{k=0}^\infty\sum_{n=1}^{N_k-1}(-1)^{n-1}
\frac{(s)_{2n+1}}{((2k+1)\pi)^{2n+2}q^{2n+1}} \right. \\
&\quad+\left. \sum_{k=0}^\infty(-1)^{N_k-1}\frac{(s)_{2N_k+1}{\it\Pi}_{s+2N_k+1}\left({(2k+1)\pi q}\right)}{((2k+1)\pi)^{2N_k+2}q^{2N_k+1}} \right),
\end{aligned}
\end{equation}
as long as $|\arg q|<\pi.$ This completes the proof.

\section{Special cases and examples}

In this section, as  examples, we shall write the asymptotic series for the second derivative of $\zeta_{E}(s,q)$ at  $s=-n$ explicitly as $|q|\to\infty$ in the sector $|\arg q|<\pi.$

For an integer $n\in\mathbb N_0,$ by Corollary \ref{cor3} with $m=2$ we have
\begin{equation}\label{k=2}
\begin{aligned}
\zeta_E^{''}(-n,q)
&\sim2\zeta_E'(-n,q)\log q-\zeta_E(-n,q)\log^2q \\
&\quad-\frac12\sum_{k=1}^\infty E_{2k+1}(0)\sum_{j_1=0}^{2k}\frac1{2k-j_1+1}  \sum_{j_2=0}^{\min(n,j_1-1)} \binom{n}{j_2}
\frac{(-1)^{j_2}}{j_1-j_2}q^{n-2k-1}.
\end{aligned}
\end{equation}
Now setting $n=0,1$ in (\ref{k=2}) respectively, using Corollary \ref{n=01}, we have
\begin{equation}\label{k=2 with n=0}
\begin{aligned}
\zeta_E^{''}(0,q)
&\sim\frac12\log^2 q -\left(\frac12\log q\right)q^{-1}  \\
&\quad+\sum_{k=1}^\infty E_{2k+1}(0)\left(\frac{\log q}{2k+1}-\frac12\sum_{j=1}^{2k}\frac1{j(2k-j+1)}\right)q^{-2k-1} ,
\end{aligned}
\end{equation}
and
\begin{equation}\label{k=2 with n=1}
\begin{aligned}
\zeta_E^{''}(-1,q)
&\sim-\left(\frac12\log q-\frac14\log^2 q\right) +\left(\frac12\log^2q\right)q\\
&\quad+\frac12\sum_{k=1}^\infty E_{2k+1}(0)\left(\sum_{j=2}^{2k}\frac1{(2k-j+1)j(j-1)} \right. \\
&\quad\quad\qquad\qquad\qquad\qquad\qquad\left.-\frac1{2k}-\frac{2\log q}{(2k+1)(2k)} \right)q^{-2k}.
\end{aligned}
\end{equation}
Using \eqref{s-sum}, we can easily check that
\begin{equation}\label{zp(2)-1}
\begin{aligned}
\sum_{k=1}^\infty \sum_{j_1=0}^{2k}\sum_{j_2=0}^{\min(n,j_1-1)}
=\sum_{k=1}^{\left\lfloor \frac{n}{2}\right\rfloor}\sum_{j_1=0}^{2k}\sum_{j_2=0}^{j_1-1}
+\sum_{k=\left\lfloor \frac{n}{2}\right\rfloor+1}^\infty\sum_{j_1=0}^{2k}\sum_{j_2=0}^{\min(n,j_1-1)}.
\end{aligned}
\end{equation}
For  $n\geq2,$ from (\ref{k=2}) and (\ref{zp(2)-1}) we obtain
\begin{equation}\label{k=2-ne}
\begin{aligned}
\zeta_E^{''}(-n,q)
&\sim-2\zeta_E'(-n,q)\log q-\zeta_E(-n,q)\log^2q \\
&\quad-\frac12\sum_{k=1}^{\left\lfloor \frac{n}{2}\right\rfloor} E_{2k+1}(0)\sum_{j_1=0}^{2k}\frac1{2k-j_1+1}\sum_{j_2=0}^{j_1-1} \binom{n}{j_2}
\frac{(-1)^{j_2}}{j_1-j_2}q^{n-2k-1} \\
&\quad-\frac12\sum_{k=\left\lfloor \frac{n}{2}\right\rfloor+1}^\infty E_{2k+1}(0)\sum_{j_1=0}^{2k}\frac1{2k-j_1+1} \\
&\qquad\times\sum_{j_2=0}^{\min(n,j_1-1)} \binom{n}{j_2}
\frac{(-1)^{j_2}}{j_1-j_2}q^{n-2k-1}.
\end{aligned}
\end{equation}
By a slight modification of \eqref{s-sum}, we also have
\begin{equation}\label{zp(2)-2}
\begin{aligned}
\sum_{k=\left\lfloor \frac{n}{2}\right\rfloor+1}^\infty\sum_{j_1=0}^{2k}\sum_{j_2=0}^{\min(n,j_1-1)}
=\sum_{k=\left\lfloor \frac{n}{2}\right\rfloor+1}^\infty\sum_{j_1=0}^{n}\sum_{j_2=0}^{j_1-1}
+\sum_{k=\left\lfloor \frac{n}{2}\right\rfloor+1}^\infty\sum_{j_1=n+1}^{2k}\sum_{j_2=0}^{n},
\end{aligned}
\end{equation}
this can be rewritten  (\ref{k=2-ne}) as
\begin{equation}\label{k=2-ne1}
\begin{aligned}
\zeta_E^{''}(-n,q)
&\sim-2\zeta_E'(-n,q)\log q-\zeta_E(-n,q)\log^2q \\
&\quad-\frac12\sum_{k=1}^{\left\lfloor \frac{n}{2}\right\rfloor} E_{2k+1}(0)\sum_{j_1=0}^{2k}\frac1{2k-j_1+1}\sum_{j_2=0}^{j_1-1} \binom{n}{j_2}
\frac{(-1)^{j_2}}{j_1-j_2}q^{n-2k-1} \\
&\quad-\frac12\sum_{k=\left\lfloor \frac{n}{2}\right\rfloor+1}^\infty E_{2k+1}(0)\sum_{j_1=0}^{n}\frac1{2k-j_1+1}\sum_{j_2=0}^{j_1-1} \binom{n}{j_2}
\frac{(-1)^{j_2}}{j_1-j_2}q^{n-2k-1}  \\
&\quad-\frac12\sum_{k=\left\lfloor \frac{n}{2}\right\rfloor+1}^\infty E_{2k+1}(0)
\sum_{j_1=n+1}^{2k}\frac{(-1)^n}{(2k-j_1+1)(n+1)\binom{j_1}{n+1}}
q^{n-2k-1},
\end{aligned}
\end{equation}
which we have used the combinational identity
$$\sum_{j_2=0}^{n} \binom{n}{j_2}\frac{(-1)^{j_2}}{j_1-j_2}=\frac{(-1)^nn!}{j_1(j_1-1)\cdots(j_1-n)}=
\frac{(-1)^n}{(n+1)\binom{j_1}{n+1}}$$
(see \eqref{beta-like2}).
When $n\geq2,$ substituting the results of Corollaries \ref{cor1} and  \ref{cor2} into (\ref{k=2-ne1}) and using \eqref{beta-like2} we get
\begin{equation}\label{k=2-fin}
\begin{aligned}
&\zeta_E^{''}(-n,q) \\
&\sim-\frac12 q^{n-1}\log q +\frac12 E_n(q)\log^2 q \\
&\quad+\sum_{k=1}^{\left\lfloor \frac{n}{2}\right\rfloor} E_{2k+1}(0)\sum_{j_1=0}^{2k}
\left(\binom{n}{j_1}\frac{(-1)^{j_1}\log q}{2k-j_1+1} -\frac1{2(2k-j_1+1)}\sum_{j_2=0}^{j_1-1} \binom{n}{j_2}
\frac{(-1)^{j_2}}{j_1-j_2}\right) \\
&\qquad\times q^{n-2k-1} \\
&\quad+\sum_{k=\left\lfloor \frac{n}{2}\right\rfloor+1}^\infty E_{2k+1}(0)\left(
\frac{(-1)^n\log q}{(n+1)\binom{2k+1}{n+1}}
-\frac12\sum_{j_1=0}^{n}\frac1{2k-j_1+1}\sum_{j_2=0}^{j_1-1} \binom{n}{j_2}
\frac{(-1)^{j_2}}{j_1-j_2}\right. \\
&\quad\qquad\qquad\qquad\qquad\qquad\left.-\frac12\sum_{j_1=n+1}^{2k}\frac{(-1)^n}{(2k-j_1+1)(n+1)\binom{j_1}{n+1}}\right)
q^{n-2k-1} .
\end{aligned}
\end{equation}
This result may be regarded as an analogue of  \cite[(170)]{EE}. And the above asymptotic expansions turn out to be very useful in the effective Lagrangian theory of quark confinement (see \cite{EE}, \cite{EE0} and \cite{EE1}).

When $n=2,3,$ (\ref{k=2-fin}) yields the following asymptotic formulas:
$$
\begin{aligned}
\zeta_E^{''}(-2,q)
&\sim\left(\frac12\log^2 q\right)q^2-\left(\frac12\log q+\frac12\log^2 q\right)q -\left(\frac38-\frac{1}{12}\log q\right)q^{-1} \\
&\quad+\sum_{k=2}^\infty E_{2k+1}(0)\left(
\frac{2\log q}{(2k+1)(2k)(2k-1)} \right.\\
&\qquad\qquad\qquad\qquad\left.-\frac12\left(\frac1{2k}+\frac{5}{2(2k-1)}\right)\right. \\
&\qquad\qquad\qquad\qquad-\left.\sum_{j=3}^{2k}\frac1{(2k-j+1)j(j-1)(j-2)} \right)q^{-2k+1},
\end{aligned}
$$
$$
\begin{aligned}
\zeta_E^{''}(-3,q)
&\sim\left(\frac12\log^2 q\right)q^3-\left(\frac12\log q+\frac34\log^2 q\right)q^2
\\
&\quad +\left(\frac14+\frac{11}{24}\log q+\frac18\log^2 q\right) +\left(\frac1{48}+\frac1{40}\log q\right)q^{-2} \\
&\quad+\sum_{k=3}^\infty E_{2k+1}(0)\left(
\sum_{j=4}^{2k}\frac3{(2k-j+1)j(j-1)(j-2)(j-3)} \right. \\
&\qquad\qquad\qquad\qquad
-\frac{6\log q}{(2k+1)(2k)(2k-1)(2k-2)}  \\
&\qquad\qquad\qquad\qquad
\left. -\frac12\left(\frac1{2k}-\frac5{2(2k-1)}+\frac{11}{6(2k-2)}\right)
\right)q^{-2k+2} .
\end{aligned}
$$

\section{Concluding remarks and discussions}

For Re$(s)>0$, the Riemann zeta function $\zeta(s)$ and the alternating zeta function $\zeta_E(s)$ are connected by the equation (\ref{Riemann-Euler1}).
Furthermore, the function $\zeta_E(s)$ appeared as a bridge for Euler's derivation of the functional equation for $\zeta(s).$
In fact, according to Weil's history \cite[pp.~273--276]{Weil},
Euler  ``proved"
\begin{equation}~\label{fe}
	\frac{\zeta_E(1-s)}{\zeta_E(s)}=-\frac{\Gamma(s)(2^{s}-1)\cos\left(\frac{\pi s}2\right)}{(2^{s-1}-1)\pi^{s}},
\end{equation}
then from (\ref{Riemann-Euler1}) and (\ref{fe}), he got the functional equation of $\zeta(s).$

Note that 
$$\Gamma(-2n+\epsilon)=\frac{\Gamma(\epsilon+1)}{(-2n+\epsilon)\cdots(-1+\epsilon)}\frac1\epsilon$$
and
$$\zeta_E(-2n)=0.$$
Corollary \ref{cor2} can be used to obtain asymptotic series for $\zeta_E(s)$ of odd positive integer argument.
Indeed, put $s=-2n+\epsilon~(n\in\mathbb N)$ in \eqref{fe} we have
\begin{equation}\label{d-Ezeta}
\begin{aligned}
\zeta_E(2n-\epsilon+1)&=2(2\pi)^{2n-\epsilon}\Gamma(-2n+\epsilon)\frac{1-2^{-2n+\epsilon}}{1-2^{2n-\epsilon+1}}
\zeta_E(-2n+\epsilon) \\
&\quad\times\cos\left(\frac{\pi(-2n+\epsilon)}{2}\right) \\
&=2(2\pi)^{2n-\epsilon}\frac{\Gamma(\epsilon+1)}{(-2n+\epsilon)\cdots(-1+\epsilon)}
\frac{1-2^{-2n+\epsilon}}{1-2^{2n-\epsilon+1}} \\
&\quad\times\left(\frac{\zeta_E(-2n+\epsilon)-\zeta_E(-2n)}\epsilon\right)\cos\left(\frac{\pi(-2n+\epsilon)}{2}\right).
\end{aligned}
\end{equation}
Then taking $\lim_{\epsilon\to0}$ we immediately get
\begin{equation}\label{ft-eq1}
\zeta_E(2n+1)=\frac{2(-1)^n(2\pi)^{2n}}{(2n)!}\frac{2^{-2n}-1}{2^{2n+1}-1}\zeta'_E(-2n).
\end{equation}
Therefore, by using \eqref{ft-eq1} and Corollary \ref{cor2} with $q=1,$ we obatin the following approximation
\begin{equation}\label{ft-eq2}
\begin{aligned}
\zeta_E(2n+1)&\approx
\frac{2(-1)^n(2\pi)^{2n}}{(2n)!}\frac{2^{-2n}-1}{2^{2n+1}-1} \\
&\quad\times\left(\frac14-\frac12\sum_{k=1}^nE_{2k+1}(0)\sum_{j=0}^{2k} \binom{2n}j\frac{(-1)^j}{2k-j+1} \right.  \\
&\qquad\quad\left.-\frac{(2n)!}{2}\sum_{k=n+1}^\infty\frac{E_{2k+1}(0)}{(2k+1)(2k)\cdots(2k-2n+1)} \right).
\end{aligned}
\end{equation}
As examples, we have
\begin{equation}\label{ft-eq3}
\zeta_E(3)\approx
\frac{3\pi^2}{7}\left(\frac14-\frac18\sum_{j=0}^2\binom 2j\frac{(-1)^j}{2-j+1}
-\sum_{k=2}^\infty\frac{E_{2k+1}(0)}{(2k+1)(2k)(2k-1)}\right).
\end{equation}
Here the symbol ``$\approx$" has its usual meaning in this context, namely, that the first $n$ terms of the asymptotic series on the right-hand side of \eqref{ft-eq3} yield an approximation to $\zeta_E(3)$ with an error less than the magnitude of 
the $(n + 1)$th term. Under these conditions, the best approximation to $\zeta_E(3)$ is obtained by keeping five terms in the series,  
\begin{equation}\label{ft-eq4}
\zeta_E(3)\approx
\frac{3\pi^2}{7}\left(\frac14-\frac18\sum_{j=0}^2\binom 2j\frac{(-1)^j}{2-j+1}
-\frac{E_{5}(0)}{5\cdot4\cdot3}\right)=\frac{13\pi^2}{140}.
\end{equation}
This result may be regarded as an analogue of \cite[(23)]{Ru}. In particular, Rudaz \cite{Ru} studied the large-$q$ asymptotic behavior 
of the $s$-derivative of the Hurwitz zeta function $\zeta(s,q)$ evaluated at negative integer values of $s.$

We consider the remainder term $R_N(s,q)$ with $s=0.$ Let $N\in\mathbb N$ and $q$ be a real number.
Using \cite[p. 519, (3.5)]{CP}, we get
\begin{equation}\label{cp-(3.5)}
\log\Gamma(x)=\int_0^\infty\left((x-1)e^t-\frac{e^{-t}-e^{-xt}}{1-e^{-t}}\right)\frac{\d t}{t}, \quad x>0.
\end{equation}
Then by the integral representation (see \cite[p. 519]{CP})
\begin{equation}\label{cp-(3.5)-2}
\log x=\int_0^\infty\frac{e^{-t}-e^{-xt}}{t}{\d t},
\end{equation}
we obtain
\begin{equation}\label{E-eq1}
\begin{aligned}
\log\left(\frac{\Gamma(x+1)}{\Gamma(x+\frac12)}\right)-\frac12\log x
&=\int_0^\infty\left(-\frac{1}{e^{\frac t2}+1}+\frac12 \right)e^{-xt}\frac{\d t}{t} \\
&=-\frac12\int_0^\infty\left(\frac{2}{e^{t}+1}-1 \right)e^{-2xt}\frac{\d t}{t}.
\end{aligned}
\end{equation}
This implies
\begin{equation}\label{E-eq2}
\int_0^\infty\left(\frac{2}{e^{t}+1}-1 \right)e^{-xt}\frac{\d t}{t}=
2\log\left(\frac{\Gamma(\frac{x+1}2)}{\Gamma(\frac x2+1)}\right)+\log \left(\frac x2\right).
\end{equation}
Therefore, by (\ref{E-eq2}) and with $s=0$ in (\ref{rem}), it follows that
\begin{equation}\label{RN(0,q)}
\begin{aligned}
R_{N}(0,q)&=\int_0^\infty\left(\frac{2}{e^t+1}-1-\sum_{k=0}^{N-1}\frac{E_{2k+1}(0)}{(2k+1)!}
t^{2k+1}\right)e^{-qt}\frac{\d t}t \\
&=2\log\left(\frac{\Gamma(\frac{q+1}2)}{\Gamma(\frac q2+1)}\right)+\log \left(\frac q2\right)
-\sum_{k=0}^{N-1}\frac{E_{2k+1}(0)}{(2k+1)!}\frac1{q^{2k+1}}.
\end{aligned}
\end{equation}
From (\ref{rem-te1})  and Corollary \ref{cor1}, we can write
$\zeta_E(0,q)=\frac12-\frac12 R_N(0,q)=\frac12.$ Hence, $R_N(0,q)\sim 0$ as $q\to\infty.$
Using (\ref{RN(0,q)}), the following asymptotic expansion holds
\begin{equation}\label{RN(0,q)-1}
2\log\left(\frac{\Gamma(\frac{q+1}2)\sqrt{\frac q2}}{\Gamma(\frac q2+1)}\right)
\sim\sum_{k=0}^{N-1}\frac{E_{2k+1}(0)}{(2k+1)!}\frac1{q^{2k+1}}
\end{equation}
as $q\to\infty.$ Replacing $q$ and $2q,$ we obtain the asymptotic formula
\begin{equation}\label{ex-quo}
\begin{aligned}
\frac{\Gamma(q+1)}{\Gamma(q+\frac{1}2)}
&\sim\sqrt q\exp\left(-\sum_{k=0}^{N-1}\frac{E_{2k+1}(0)}{2^{2k+2}(2k+1)!}\frac1{q^{2k+1}}\right) \\
&\sim\sqrt q\left(1+\frac{1}{8q}+\frac{1}{128q^2}-\frac{5}{1024q^3}-\frac{21}{32768q^4}+\cdots\right),
\end{aligned}
\end{equation}
if $N$ is sufficiently large (see \cite[p. 356, (1.6)]{BE}).

We close this section by considering one or two particularly simple cases of the $s$-derivative of
$\zeta_E(s,q)$ which can be expressed in terms of the remainder $R_N(s,q).$
Applying Example \ref{ex-n=01}, the asymptotic expansions will be written as
\begin{equation}\label{z(0,q)}
\zeta_E'(0,q)=-\frac12\log q +\frac14q^{-1}-\frac12\sum_{k=1}^{N-1}\frac{E_{2k+1}(0)}{2k+1}\frac1{q^{2k+1}}
+R_N^{(\partial\zeta_E)}(0,q)
\end{equation}
and
\begin{equation}\label{z(-1,q)}
\begin{aligned}
\zeta_E'(-1,q)&=-\frac12q\log q+\left(\frac14+\frac14\log q\right)+\frac12q\sum_{k=1}^{N-1}\frac{E_{2k+1}(0)}{(2k+1)(2k)}
\frac1{q^{2k+1}} \\
&\quad+R_N^{(\partial\zeta_E)}(-1,q)
\end{aligned}
\end{equation}
for $N\in\mathbb N,$ where $R_N^{(\partial\zeta_E)}(0,q)=O_N(|q|^{-2N-1})$
and $R_N^{(\partial\zeta_E)}(-1,q)=O_N(|q|^{-2N})$ as $|q|\to\infty$ in the sector $|\arg q|\leq\pi-\delta,$ 
with $\delta>0$ being fixed (see \cite[p. 9, (4.9)]{Ne}).
Differentiating the right-hand side of (\ref{rem-te1}) with respect to $s$ and comparing the results with (\ref{z(0,q)}) 
and (\ref{z(-1,q)}), we have
$$R_N^{(\partial\zeta_E)}(0,q)=\lim_{s\to0}\frac{-\frac12 R_N(s,q)}{s}
=-\frac12\frac{\partial}{\partial s}R_N(s,q)\biggl|_{s=0}$$
and
$$R_N^{(\partial\zeta_E)}(-1,q)=\lim_{s\to-1}\frac{\frac12 qR_N(s,q)}{s(s+1)}
=-\frac12 q\frac{\partial}{\partial s}R_N(s,q)\biggl|_{s=-1}.$$

We remark here that the remainder terms of the known asymptotic expansions of the
polygamma functions, the gamma function, the Barnes $G$-function and the $s$-derivative of the
Hurwitz zeta function $\zeta(s,q)$ have been evaluated for large-$q$ in \cite[Section 4]{Ne}.


\bibliography{central}

\end{document}